\documentclass[12pt,a4paper]{amsart}
\usepackage{amscd}
\setcounter{section}{0}

\topmargin=0truecm

\theoremstyle{plain} 

\begingroup


\newtheorem{theorem}{Theorem}[section]

\newtheorem{cor}[theorem]{Corollary}

\newtheorem{lemma}[theorem]{Lemma}

\newtheorem{prop}[theorem]{Proposition}

\endgroup

\overfullrule=0pt


\theoremstyle{definition}

\newtheorem{definition}[theorem]{Definition}

\theoremstyle{remark}

\begin{document}

\title[On  subanalytic subsets of real analytic orbifolds]
{On  subanalytic subsets of real analytic orbifolds}

\author{Marja Kankaanrinta}
\address{Department of Mathematics \\
         P.O. Box 400137\\ University of Virginia\\
         Charlottesville VA 22904 - 4137\\ U.S.A.}
\email{mk5aq@@virginia.edu}

\date{\today}

\subjclass[2010]{57R18, 14P15}

\keywords{semianalytic, subanalytic, orbifold}

\begin{abstract} The purpose of this paper is to define semi-
and subanalytic subsets and maps in the context of real analytic
orbifolds and to study their basic properties. We prove results
analogous to some well-known results  in the manifold case.
For example, we prove that if $A$ is a subanalytic subset of a real
analytic quotient orbifold $X$, then there is a real analytic orbifold $Y$ of
the same dimension as $A$ and a proper real analytic map
$f\colon Y\to X$ with $f(Y)=A$. We also study images and inverse images of
subanalytic sets and show that if $X$ and $Y$ are real analytic orbifolds and if
$f\colon X\to Y$ is a subanalytic map, then
the inverse image $f^{-1}(B)$ of any subanalytic subset $B$ of $Y$ is 
subanalytic. If, in addition, $f$ is proper, then also the image $f(A)$ of
any subanalytic subset $A$ of $X$ is proper.
\end{abstract}
\maketitle
\section{Introduction}
\label{intro}

\noindent Semianalytic subsets of real analytic manifolds are locally
defined by a finite number of equalities and inequalities of real analytic
maps. 
Thus the definition of a semianalytic set is analogous to that of a semialgebraic
set. Consequently, the theory of semianalytic sets resembles that of semialgebraic
sets. However, there are essential differences.  While the property of being
semialgebraic is preserved by a rational map, the image of a semianalytic
set  by a real analytic map is not necessarily semianalytic, not even if the
map is proper. 

From many points of view, for example when one considers such 
topics as triangulations or stratifications, the images of semianalytic sets by proper
real analytic maps are as good as semianalytic sets.
This fact led to the concept of subanalytic sets.
Subanalytic sets form the smallest class of sets that contains all the
semianalytic sets and is closed under the operation of taking images by
proper real analytic maps. Locally,
subanalytic sets are just projections of relatively compact
semianalytic sets. {\L}ojasiewicz was the first one to study properties of
semianalytic and subanalytic sets, see \cite{Lo}, although the
word "subanalytic" is due to Hironaka (\cite{Hi1}).
The theory of semianalytic and subanalytic
sets was then developed by Gabrielov (\cite{Ga}), Hardt (\cite{Ha1},
\cite{Ha}) and Hironaka (\cite{Hi1}, \cite{Hi2}).

Orbifolds are generalizations of manifolds.
The concept of an orbifold was originally introduced in 1957 by Satake (\cite{S}),  who
used the term  "$V$-manifold". The word "orbifold" is due to Thurston
(\cite{Th}), who used orbifolds to study the structure of $3$-manifolds
in the 1970's. Locally, an $n$-dimensional orbifold, 
where $n\in{\mathbb{N}}$,
is an orbit space of a finite group action on an open connected 
subset of ${\mathbb{R}}^n$. 
Maps between orbifolds that take into account the local orbit space
structure, i.e., maps that locally are induced by equivariant maps, are
called orbifold maps.  

The purpose of this paper is to generalize the concepts of semianalytic and
subanalytic sets to the orbifold case.  
We begin by discussing real analytic orbifolds in Sections \ref{real},
\ref{esimerkki} and \ref{suborbi}. 
We show that the quotient space of
a real analytic manifold by a proper real analytic almost free action of a
Lie group is a real analytic orbifold (Theorem \ref{quotorb}) and that
every suborbifold of a real analytic quotient orbifold also is a quotient orbifold
(Theorem \ref{apusub}).

Semi- and subanalytic subsets of real analytic orbifolds are introduced in
Section \ref{semisubdefi}.
We  show that the  closures,
interiors, complements, finite unions and finite intersections of semianalytic  
(subanalytic) subsets of real analytic orbifolds are semianalytic (subanalytic)
(Theorem \ref{basicprop}). 

We continue by defining semianalytic
and subanalytic maps between two orbifolds and show that
the definitions are analogous to those  
of a semianalytic and a subanalytic map between two real analytic
manifolds (Theorem \ref{graafi}).
Basic properties of semianalytic and subanalytic maps are considered
in the orbifold case. In particular, we show that the inverse image of a
subanalytic set by a subanalytic map is subanalytic and that the image
of a subanalytic set by a proper subanalytic map is subanalytic (Theorem
\ref{kuvat}). 

By definition, subanalytic subsets of real analytic manifolds are  locally
projections of relatively compact semianalytic sets. In Section 
\ref{altern}, we show that subanalytic subsets of real analytic orbifolds
can be characterized by the same property (Theorem \ref{orbiss}).

We also prove a version of the  uniformization theorem for
subanalytic subsets of
quotient orbifolds, i.e., we show that if $X$ is a real analytic quotient orbifold
and if $A$ is a closed subanalytic subset of $X$, then there exists a real analytic
orbifold $Y$ of the same dimension as $A$ and a proper real analytic
map $f\colon Y\to X$ such that $f(Y)=A$ (Theorem \ref{important}).

Results of this paper are applied in \cite{Ka3}, where subanalytic
triangulations of real analytic orbifolds are studied.

\section{Real analytic orbifolds}
\label{real}

\noindent In this section we recall the definition and some basic properties
of an orbifold.

\begin{definition}
\label{ensin}
Let $X$ be a topological space and let $n> 0$.
{\begin{enumerate}
\item An $n$-dimensional {\it orbifold chart} for an open subset $V$
of  $X$ is a triple
$(\tilde{V}, G,\varphi)$ such that
{\begin{enumerate}
\item $\tilde{V}$ is a connected open subset of ${\mathbb{R}}^n$,

\item $G$ is a finite group of homeomorphisms acting  on $\tilde{V}$, let
${\rm ker}(G)$ denote the subgroup of $G$ acting trivially on $\tilde{V}$.

\item $\varphi\colon \tilde{V}\to V$ is a $G$-invariant map inducing a
homeomorphism from the orbit space $\tilde{V}/G$ onto  $V$.
\end{enumerate}}

\item  If $V_i\subset V_j$, an {\it embedding} $(\lambda_{ij}, h_{ij})
\colon (\tilde{V}_i, G_i, \varphi_i)\to
(\tilde{V}_j, G_j,\varphi_j)$ between two orbifold charts is 
{\begin{enumerate}
\item an injective homomorphism $h_{ij}\colon G_i\to G_j$, 
such that $h_{ij}$ is an isomorphism from ${\rm ker}(G_i)$ to
${\rm ker}(G_j)$,
and
\item an equivariant embedding
$\lambda_{ij}\colon \tilde{V}_i\to \tilde{V}_j$ such that $\varphi_j\circ
\lambda_{ij}=\varphi_i$. (Here the equivariantness means that 
$\lambda_{ij}(gx)=h_{ij}(g)\lambda_{ij}(x)$ for every $g\in G_i$
and for every $x\in \tilde{V}_i$.)
\end{enumerate}}

\item  An {\it orbifold atlas} on $X$ is a family ${\mathcal V}=\{ (\tilde{V_i}, G_i, \varphi_i)\}_{
i\in I}$ of orbifold charts such that
{\begin{enumerate}
\item $\{ V_i\}_{i\in I}$ is a covering of $X$,

\item  given two charts $(\tilde{V}_i, G_i, \varphi_i)$ and
$(\tilde{V}_j, G_j, \varphi_j)$ and a point $x\in V_i\cap V_j$, there exists
an open neighborhood $V_k\subset V_i\cap V_j$ of $x$ and a chart
$(\tilde{V}_k, G_k, \varphi_k)$ such that there are embeddings 
$(\lambda_{ki}, h_{ki})\colon (\tilde{V}_k,G_k, \varphi_k)\to
(\tilde{V}_i, G_i,\varphi_i)$ and
$(\lambda_{kj}, h_{kj})\colon (\tilde{V}_k,G_k, \varphi_k)\to
(\tilde{V}_j, G_j,\varphi_j)$.
\end{enumerate}}

\item An atlas ${\mathcal V}$ is called a {\it refinement} of another atlas
${\mathcal U}$ if for every chart in ${\mathcal V}$ there exists an embedding
into some chart of ${\mathcal U}$. Two orbifold atlases are called
{\it equivalent} if they have a common refinement.
\end{enumerate}}
\end{definition}
 
 Every orbifold atlas ${\mathcal V}$ is contained in a unique {\it maximal}
 orbifold atlas, i.e., in a maximal family of orbifold charts satisfying 
 conditions $(1)$, $(2)$ and $(3)$.
 
 \begin{definition}
 An $n$-dimensional {\it orbifold} is a paracompact Hausdorff space $X$
 equipped with an equivalence class of $n$-dimensional orbifold atlases.
 \end{definition}
 
 Without further  mentioning, we assume that every orbifold has only
 countably many connected components.

 The sets $V\in {\mathcal V}$ are called {\it basic open sets} in $X$.
 
 An orbifold is  called {\it real analytic} (resp. {\it smooth}, i.e., {\it differentiable of 
 class ${\rm C}^\infty$}), 
 if each $G_i$ acts via 
 real analytic (resp. smooth) diffeomorphisms on 
 $\tilde{V}_i$ and if each embedding $\lambda_{ij}\colon {\tilde{V}}_i\to
 \tilde{V}_j$ is real analytic (resp. smooth).

An $n$-dimensional orbifold $X$  is called {\it locally smooth}, if 
 for each $x\in X$ there is an orbifold chart $(\tilde{U}, G,\varphi)$
 with  $x\in U=\varphi(\tilde{U})$ and such that
 the action of $G$ on $\tilde{U}\cong {\mathbb{R}}^n$  
 is orthogonal. 
 By the 
 slice theorem (see Proposition 2.2.2 in \cite{Pa2} for the smooth
 case and Theorem 2.5 in \cite{Ka} for the real analytic version),
 all real analytic and smooth orbifolds are locally smooth.

Let $Y_1$ and $Y_2$ be real analytic orbifolds
with orbifold atlases ${\mathcal V}=\{ (\tilde{V}_i, G_i, \varphi_i)\}_{i\in I}$
and ${\mathcal U}=\{(\tilde{U}_j, H_j, \psi_j)\}_{j\in J}$, respectively.
Let the groups $G_i\times H_j$ act diagonally on the sets
$\tilde{V}_i\times \tilde{U}_j$. Then the product $Y_1\times Y_2$ is a
real analytic orbifold with the orbifold atlas ${\mathcal V}\times
{\mathcal U}=\{ ( \tilde{V}_i\times\tilde{U}_j, G_i\times H_j,
\varphi_i\times\psi_j)\}_{(i,j)\in I\times J}$.

\section{Real analytic quotient orbifolds}
\label{esimerkki}

\noindent Recall that a map $f\colon M\to N$ is called {\it proper}, if the 
inverse image
$f^{-1}(K)$ of any compact subset $K$ of $N$ is compact.
Let $G$ be a Lie group and let $M$ be a real analytic manifold
on which $G$ acts real analytically. If the action of $G$ on $M$
is  {\it proper}, i.e.,  if the map
$$
G\times M\to M\times M, (g,x)\mapsto(gx,x),
$$
is proper, we call $M$ a {\it proper real analytic $G$-manifold}.
Proper smooth $G$-manifolds are defined accordingly.
The action is called {\it almost free},  if every
isotropy group is finite.  
The following theorem shows that orbit spaces  of such actions
are orbifolds.  At least the smooth case of the theorem is known,
see \cite{AR} p. 536. The proof is presented here, since we failed to
find one in literature.

\begin{theorem}
\label{quotorb}
Let $G$ be a Lie group and let $M$ be a a proper  real
analytic (resp. smooth) 
$G$-manifold. Assume the action of $G$ on $M$ is
almost free. Then the orbit space $M/G$ is a real analytic 
(resp. smooth) orbifold.
\end{theorem}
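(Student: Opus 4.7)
The plan is to construct orbifold charts on $M/G$ from the real analytic slice theorem applied at each point of $M$, and then to verify the atlas axioms by refining slices at common points. Properness of the action forces each isotropy group $G_x$ to be compact, and the almost-freeness hypothesis makes $G_x$ finite. The real analytic slice theorem (Theorem~2.5 of \cite{Ka}, or Proposition~2.2.2 of \cite{Pa2} in the smooth case) produces, at each $x\in M$, a $G_x$-invariant real analytic slice $S_x$ through $x$ such that the twist map
\[
G\times_{G_x}S_x\longrightarrow U_x:=G\!\cdot\! S_x,\qquad [g,s]\mapsto gs,
\]
is a $G$-equivariant real analytic diffeomorphism onto a $G$-invariant open neighborhood of the orbit $Gx$. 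After shrinking, $S_x$ is $G_x$-equivariantly real analytically diffeomorphic to a connected open $G_x$-invariant subset $\tilde V_x\subset\mathbb R^{n}$ with $n=\dim M-\dim G$, and $G_x$ acts real analytically on $\tilde V_x$. Composing with the quotient map defines $\varphi_x\colon\tilde V_x\to V_x:=U_x/G\subset M/G$; since each $G$-orbit in $U_x$ meets $S_x$ in precisely one $G_x$-orbit, $\varphi_x$ descends to a homeomorphism $\tilde V_x/G_x\cong V_x$. These triples $(\tilde V_x,G_x,\varphi_x)$ cover $M/G$ and satisfy axioms (1)(a)--(c) of Definition~\ref{ensin}.

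Before verifying compatibility, I would check that $M/G$ is Hausdorff (closedness of the orbit equivalence relation in $M\times M$ is equivalent to properness of the action map) and paracompact (inherited from $M$ through the open continuous surjection $M\to M/G$), so $M/G$ has the correct topological type. For compatibility (axiom (3)(b)), let $(\tilde V_i,G_i,\varphi_i)$ and $(\tilde V_j,G_j,\varphi_j)$ come from slices $S_i,S_j$ through $x_i,x_j$, and let $[y]\in V_i\cap V_j$. Choose $y\in S_i$ and $g\in G$ with $gy\in S_j$. Applying the slice theorem at $y$ produces a $G_y$-invariant connected real analytic slice $W$ through $y$, and by shrinking one may assume $W\subset S_i$ and $gW\subset S_j$. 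This yields a chart $(\tilde V_k,G_y,\varphi_k)$ with $\tilde V_k\cong W$. The inclusion $W\hookrightarrow S_i$, together with $G_y\hookrightarrow G_i$, gives an equivariant embedding $(\lambda_{ki},h_{ki})$; translation by $g$ maps $W$ equivariantly into $S_j$, with $h_{kj}\colon G_y\to G_j$ defined by $\alpha\mapsto g\alpha g^{-1}$, yielding $(\lambda_{kj},h_{kj})$. Both are injective real analytic embeddings satisfying $\varphi_i\circ\lambda_{ki}=\varphi_k=\varphi_j\circ\lambda_{kj}$. The kernel conditions follow from analytic continuation: an element of $G_y$ acting trivially on the open set $\tilde V_k\subset\tilde V_i$ acts trivially on the connected $\tilde V_i$, and likewise for $\tilde V_j$, so the respective kernels correspond under $h_{ki}$ and $h_{kj}$.

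The main obstacle is the compatibility step: one must arrange a single slice $W$ at $y$ that lies inside $S_i$ and inside a translate of $S_j$ simultaneously, and one must see that the induced equivariant data (both the injective homomorphisms and the equivariant embeddings) are coherent, with the conjugation by $g$ being essential because there is no canonical way to place $y$ itself into $S_j$. In the real analytic category this relies on the analytic slice theorem of \cite{Ka} rather than its smooth counterpart; once this is in place, the smooth case follows verbatim using Palais's slice theorem \cite{Pa2}.
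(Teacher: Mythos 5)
The construction of the charts, the Hausdorff/paracompactness remarks, and the overall plan match the paper, but the compatibility step (axiom (3)(b) of Definition \ref{ensin}) contains a genuine gap: the claim that, after shrinking, a slice $W$ for the $G$-action at $y$ can be chosen with $W\subset S_i$ \emph{and} $gW\subset S_j$. This amounts to requiring $W\subset S_i\cap g^{-1}S_j$, where $S_i$ and $g^{-1}S_j$ are two $(\dim M-\dim G)$-dimensional submanifolds through $y$ transverse to the orbits; when $\dim G>0$ two such transversals through the same point need not agree on any neighborhood, and their intersection is typically of strictly smaller dimension, so no open slice $W$ fits inside both. Concretely, let $G=\mathbb{R}$ act on $M=\mathbb{R}^2$ by $t\cdot(a,b)=(a+t,b)$ (free, proper, real analytic), let $S_i=\{0\}\times\mathbb{R}$ and $S_j=\{(b^2,b)\mid b\in\mathbb{R}\}$, and take $y=(0,0)$, so that the only admissible $g$ is the identity; then $S_i\cap S_j=\{y\}$ and no one-dimensional slice at $y$ lies in both. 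Since your two embeddings $(\lambda_{ki},h_{ki})$ and $(\lambda_{kj},h_{kj})$ are defined as the set-theoretic inclusion of $W$ and the translate $gW$, the argument breaks down exactly at this point (it would be fine only for discrete $G$, where slices are open subsets of $M$).

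The paper circumvents this by never asking for a common submanifold inside both slices. It models the intermediate chart on the normal space ${\rm N}_{\tilde z}$ of the full $G$-orbit at $\tilde z$, and produces the two embeddings separately: applying the slice theorem to the \emph{finite} group $G_{\tilde x}$ acting on $U=S_i$ at $\tilde z$ gives a $G_{\tilde z}$-equivariant real analytic embedding ${\rm N}_{\tilde z}\cong{\rm N}'_{\tilde z}\hookrightarrow U$, and likewise for $G_{\tilde y}$ acting on $V=S_j$ at $g\tilde z$ one gets a $G_{g\tilde z}$-equivariant embedding ${\rm N}_{g\tilde z}\cong{\rm N}'_{g\tilde z}\hookrightarrow V$; these are then linked by the $\theta$-equivariant isomorphism ${\rm N}_{\tilde z}\to{\rm N}_{g\tilde z}$ induced by the diffeomorphism $g\colon M\to M$, where $\theta(h)=ghg^{-1}$. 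Thus the embedding into the second chart is a composition of equivariant isomorphisms of normal spaces with a slice-theorem embedding inside $V$, not the translation of a subset of $S_i$. To repair your proof you would need to replace the inclusion of $gW$ into $S_j$ by such an equivariant "straightening" into $S_j$ (equivalently, apply the slice theorem to the finite isotropy actions on $S_i$ and $S_j$ themselves), which is precisely the paper's argument; your conjugation homomorphism $\alpha\mapsto g\alpha g^{-1}$ and the analytic-continuation remark about kernels can then be kept.
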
 

\begin{proof} We prove the real analytic case, the smooth case is similar.
Let $\tilde{x}\in M$. It follows from the real analytic slice theorem that there is a
slice $U$ at  $\tilde{x}$ and a $G_{\tilde{x}}$-equivariant real analytic
diffeomorphism $U\to{\mathbb{R}}^m$ where $G_{\tilde{x}}$ acts orthogonally
on ${\mathbb{R}}^m$, and $m=\dim{M}-\dim(G/G_{\tilde{x}})
=\dim{M}-\dim{G}$.
More precisely,  we may consider $U$ as the
normal space ${\rm N}_{\tilde{x}}={\rm T}_{\tilde{x}}(M)/{\rm T}_{\tilde{x}}(G\tilde{x})$ 
of the orbit $G\tilde{x}$ of $\tilde{x}$ at $\tilde{x}$. The orbifold charts of $M/G$ are then
$(U, G_{\tilde{x}},\pi)$ where $\pi\colon U\to U/G_{\tilde{x}}$ is the natural projection.

Since $G$ acts properly on $M$, it follows that $M/G$ is a Hausdorff
space (Theorem 1.2.9 in \cite{Pa2}) and it also follows that $M/G$ is
paracompact.

We check that Condition $3b$ of Definition \ref{ensin} holds: Let $x,y\in 
M/G$ and let $(U, G_{\tilde{x}},\pi)$ and $(V, G_{\tilde{y}}, \pi)$ be charts such that
$\pi(\tilde{x})=x$ and $\pi(\tilde{y})=y$. 
Let $z\in \pi(U)\cap \pi(V)$. Then there
is $\tilde{z}\in U$ such that $\pi(\tilde{z})=z$. For some $g\in G$,
$g\tilde{z}\in V$. Clearly, $\pi(g\tilde{z})=z$. Consider $U$ and $V$ as
$G_{\tilde{x}}$- and $G_{\tilde{y}}$-spaces, respectively.  We can now
apply the slice theorem for those spaces. Thus, 
let ${\rm N}'_{\tilde{z}}$ 
and ${\rm N}'_{g\tilde{z}}$ be
the normal spaces of the orbits $(G_{\tilde{x}}){\tilde{z}}$ at $\tilde{z}$ in $U$ and 
 $(G_{\tilde{y}}){g\tilde{z}}$ at $g\tilde{z}$ in $V$, respectively.

Let ${\rm N}_{\tilde{z}}$ 
and ${\rm N}_{g\tilde{z}}$ be
the normal spaces of the orbits $G{\tilde{z}}$ at $\tilde{z}$ and 
 $G{g\tilde{z}}=G{\tilde{z}}$ at $g\tilde{z}$, respectively. 
Then  ${\rm N}_{\tilde{z}}$  is $G_{\tilde{z}}$-equivariantly isomorphic
to ${\rm N}'_{\tilde{z}}$ and ${\rm N}_{g\tilde{z}}$ is 
$G_{g\tilde{z}}$-equivariantly isomorphic to ${\rm N}'_{g\tilde{z}}$.
There now are real analytic embeddings
$\lambda_U\colon {\rm N}_{\tilde{z}}\cong
{\rm N}'_{\tilde{z}}\hookrightarrow U$
and $\lambda_V\colon  {\rm N}_{g\tilde{z}}\cong
{\rm N}'_{g\tilde{z}}\hookrightarrow V$, where
$\lambda_U$ is $G_{\tilde{z}}$-equivariant and 
$\lambda_V$ is $G_{g\tilde{z}}$-equivariant.
Now, $G_{g\tilde{z}}=gG_{\tilde{z}}g^{-1}$ and
$\theta\colon  G_{\tilde{z}}\to gG_{\tilde{z}}g^{-1}$, 
$\theta(h)=ghg^{-1}$, is an isomorphism. The claim follows, since the real
analytic diffeomorphism $g\colon M\to M$ induces a real
analytic $\theta$-equivariant isomorphism ${\rm N}_{\tilde{z}}\to {\rm N}_{g\tilde{z}}$.
\end{proof}

Orbit spaces of real analytic
manifolds by real analytic proper almost free actions of
Lie groups are  called real analytic {\it quotient orbifolds}. 

Assume $X$ is an $n$-dimensional  smooth orbifold, i.e., an orbifold differentiable of
degree ${\rm C}^\infty$. Assume that $X$ is {\it reduced}. This means that
all the groups in the definition of an orbifold chart act effectively.
Then it is well-known that $X$ is a quotient orbifold. More precisely,
there exists a smooth manifold $M$ on which the orthogonal group
$O(n)$ acts smoothly, effectively and almost freely such that the
orbit space $M/O(n)$ is smoothly diffeomorphic to $X$ as an orbifold.

It would be nice to have a corresponding result for real analytic reduced orbifolds.
The proof of the smooth case is based on the use of the frame bundle over
$X$, hence it makes use of a smooth Riemannian metric on $X$.  The proof can not
be applied to the real analytic case since, as far as we know, it is not known
how to construct real analytic Riemannian metrics for real analytic orbifolds
unless the orbifold already is known to be a quotient. The smooth result
gives a smooth quotient orbifold smoothly diffeomorphic to a given real analytic
reduced orbifold and, by using results from equivariant differential topology,
that quotient orbifold can be given a real analytic structure. 
The given real analytic reduced orbifold and the constructed real analytic quotient 
orbifold  are then smoothly diffeomorphic. To obtain a real analytic
diffeomorphism, one would need to be able to approximate smooth
orbifold maps by real analytic ones in some topology resembling the
Whitney topology for maps between real analytic manifolds.

\section{Suborbifolds}
\label{suborbi}

\noindent In this section we briefly discuss suborbifolds. Suborbifolds will
only be used in Section \ref{uniformi}.

\begin{definition}
\label{neat}
Let $X$ be an $n$-dimensional real analytic (resp. smooth) orbifold.
We say that
$Y$ is an  $m$-dimensional real analytic (resp. smooth)
{\it suborbifold} of $X$ if the following hold:
\begin{enumerate}
\item  $Y$ is a subset of $X$ equipped with the subspace topology.

\item For each $y\in Y$ and for each neighborhood $W$ of $y$ in $X$, there is
an orbifold chart $(\tilde{U}, G, \varphi)$ for $X$ with  $U=\varphi(\tilde{U})\subset W$,
and a subset $\tilde{V}$ of $\tilde{U}$ such that
$(\tilde{V}, G,\varphi\vert \tilde{V})$ is an 
$m$-dimensional orbifold chart for $Y$ and  $y\in \varphi(\tilde{V})$.

\item $\varphi(\tilde{V})=U\cap Y$.
\end{enumerate}
\end{definition}

Notice that there exists an alternative definition of a suborbifold. That definition
allows the group of the suborbifold chart to be any subgroup of the
group of the corresponding orbifold chart and
gives a strictly larger family of suborbifolds
than ours.

\begin{theorem} 
\label{apusub}
Let $X$ be a real analytic (resp. smooth) quotient orbifold and let
$Y$ be a real analytic (resp. smooth) suborbifold of $X$. Then $Y$ is a quotient orbifold.
\end{theorem}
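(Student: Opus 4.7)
Since $X$ is a real analytic quotient orbifold, I will write $X = M/G$, where $G$ is a Lie group acting real analytically, properly and almost freely on a real analytic manifold $M$, with orbit map $\pi \colon M \to X$. Set $\tilde Y := \pi^{-1}(Y)$. The plan is to establish that $\tilde Y$ is a real analytic $G$-invariant submanifold of $M$, that the restricted $G$-action on $\tilde Y$ remains proper and almost free, and then to invoke Theorem \ref{quotorb} to exhibit $\tilde Y / G$ as a real analytic quotient orbifold naturally isomorphic to $Y$.

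The crux is to show that $\tilde Y$ is a real analytic submanifold of $M$. For each $\tilde x \in \tilde Y$, I would take a slice $U$ at $\tilde x$ as in the proof of Theorem \ref{quotorb}, so that $(U, G_{\tilde x}, \pi|U)$ is an orbifold chart of $X$ at $\pi(\tilde x)$. Applying Definition \ref{neat} with $W = \pi(U)$ yields some chart $(\tilde U', H, \varphi')$ for $X$ with $U' \subset \pi(U)$ together with a subset $\tilde V' \subset \tilde U'$ that gives a suborbifold chart for $Y$. Combining this with Condition $3b$ of Definition \ref{ensin}---which supplies a third chart embedding into both the slice chart and $(\tilde U', H, \varphi')$---and saturating under $G_{\tilde x}$, I would transport the suborbifold structure into the slice chart, obtaining (after possibly shrinking $U$) a $G_{\tilde x}$-invariant real analytic submanifold $\tilde V \subset U$ with $\pi(\tilde V) = \pi(U) \cap Y$. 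Since two points of the slice $U$ lie in the same $G$-orbit iff they lie in the same $G_{\tilde x}$-orbit, the $G_{\tilde x}$-invariance of $\tilde V$ then forces $\tilde V = U \cap \tilde Y$. On the open tube $G \cdot U \cong G \times_{G_{\tilde x}} U$ around the orbit $G \tilde x$, this identifies $\tilde Y \cap (G \cdot U)$ with $G \cdot \tilde V \cong G \times_{G_{\tilde x}} \tilde V$, a real analytic submanifold of the tube. Letting $\tilde x$ range over $\tilde Y$ and covering by such tubes, it follows that $\tilde Y$ is a real analytic $G$-invariant submanifold of $M$.

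With $\tilde Y$ in hand as a submanifold, the induced $G$-action on $\tilde Y$ is real analytic by restriction and almost free since isotropy groups do not change. Properness also passes to the restricted action, because compactness of subsets of $\tilde Y \times \tilde Y$ in the subspace topology coincides with compactness of the same subsets in $M \times M$, so the defining proper map for the ambient action restricts to the defining map for the action on $\tilde Y$. Theorem \ref{quotorb} therefore endows $\tilde Y / G$ with the structure of a real analytic quotient orbifold. The natural continuous bijection $\tilde Y / G \to Y$ induced by $\pi$ is a homeomorphism, and the slice charts of $\tilde Y / G$ supplied by Theorem \ref{quotorb}---arising from the submanifolds $\tilde V$ constructed above---match, by construction, the suborbifold charts of $Y$. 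Hence the bijection is an isomorphism of real analytic orbifolds.

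I expect the main obstacle to be the chart-juggling in the second paragraph: starting from the arbitrary suborbifold chart furnished by Definition \ref{neat}, using Condition $3b$ to pass to a common refinement, and then transporting back into the slice chart while keeping everything $G_{\tilde x}$-equivariantly real analytic and verifying that the transported subset is precisely $U \cap \tilde Y$. Once that is settled, the remainder of the argument is essentially a formal application of Theorem \ref{quotorb}.
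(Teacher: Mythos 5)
Your proposal follows essentially the same route as the paper's proof: pass to $\pi^{-1}(Y)$, use slice charts to identify the suborbifold chart with $\pi^{-1}(Y)\cap\tilde{U}$ (valid because a slice meets a $G$-orbit only in a single $G_{\tilde{x}}$-orbit), cover $\pi^{-1}(Y)$ by tubes $G\times_{G_{\tilde{x}}}\tilde{V}$ to obtain the real analytic $G$-invariant submanifold structure, and conclude via Theorem \ref{quotorb} that $\pi^{-1}(Y)/G\cong Y$ is a quotient orbifold. The chart-transport step you single out as the main obstacle is in fact glossed over in the paper, which simply takes the suborbifold chart of Definition \ref{neat} to sit inside the slice chart, so your sketch is, if anything, more explicit on that point.
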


\begin{proof} We prove the real analytic case, the smooth case is similar.
Since $X$ is a real analytic quotient orbifold, there is a
Lie group $G$ and  a real analytic manifold $M$
on which $G$ acts via a proper real analytic almost free action such that
the orbit space $M/G$ equals $X$.  Let $\pi\colon M\to M/G$ be the natural
projection. Then $\pi^{-1}(Y)$ is a $G$-invariant subset of $M$ on which
$G$ acts properly and almost freely. It remains to show that $\pi^{-1}(Y)$
is a real analytic submanifold of $M$.

Let $y\in Y$ and let $W$ be a neighborhood of $y$ in $M/G$. Then $M/G$ has
an orbifold chart $(\tilde{U}, G_{\tilde{y}}, \pi)$ where $\pi(\tilde{y})=y$,
$\tilde{U}$ is a linear slice at $\tilde{y}$ and $U=\pi(\tilde{U})\subset W$.
Moreover, $\tilde{U}$ has a subset $\tilde{V}$ such that 
$(\tilde{V}, G_{\tilde{y}}, \pi)$ is an orbifold chart for $Y$, $y\in \pi(
\tilde{V})$ and $\pi(\tilde{V})=U\cap Y$.

Clearly, $\tilde{V}\subset \pi^{-1}(Y)\cap \tilde{U}$. Since $\pi(\tilde{V})=U\cap Y$,
it follows that $\pi(\pi^{-1}(Y)\cap \tilde{U})\subset \pi(\tilde{V})$. Thus
$\pi^{-1}(Y)\cap \tilde{U}\subset G\tilde{V}$. Since $\tilde{U}$ is a slice at
$\tilde{y}$, it follows that $g\tilde{U}\cap \tilde{U}=\emptyset$, 
for every $g\in G\setminus G_{\tilde{y}}$. Hence 
$\pi^{-1}(Y)\cap \tilde{U}\subset G_{\tilde{y}}\tilde{V}=\tilde{V}$.
Thus $\tilde{V}= \pi^{-1}(Y)\cap \tilde{U}$ and we see that every orbifold chart of
$Y$ is of the form $(\pi^{-1}(Y)\cap \tilde{U}, G_{\tilde{y}}, \pi)$  where
$(\tilde{U}, G_{\tilde{y}}, \pi)$ is an orbifold chart of $M/G$.

Now, $\pi^{-1}(Y)$ can be written as a union of {\it twisted products}
$G\times_{G_{\tilde{x}}}(\pi^{-1}(Y)\cap\tilde{U})=
(G\times_{G_{\tilde{x}}} \tilde{U})\cap \pi^{-1}(Y)$, where the
$(\tilde{U}, G_{\tilde{x}}, \pi)$ are orbifold charts of $M/G$. Since 
each $\pi^{-1}(Y)\cap \tilde{U}$ is a real analytic $G_{\tilde{x}}$-manifold,
it follows that each twisted product is a proper real analytic $G$-manifold. 
Assume $\tilde{y}\in G\times_{G_{\tilde{x}_i}}(\pi^{-1}(Y)\cap\tilde{U}_i)$,
for $i=1,2$. Then there is a slice $\tilde{U}$ at $\tilde{y}$ such that
$G\times_{G_{\tilde{y}}}\tilde{U}\subset
(G\times_{G_{\tilde{x}_1}}\tilde{U}_1)\cap (G\times_{G_{\tilde{x}_2}}\tilde{U}_2)$.
Thus 
$G\times_{G_{\tilde{y}}}(\pi^{-1}(Y)\cap\tilde{U})\subset
G\times_{G_{\tilde{x}_1}}(\pi^{-1}(Y)\cap\tilde{U}_1)
\cap G\times_{G_{\tilde{x}_2}}(\pi^{-1}(Y)\cap\tilde{U}_2)$ and it
follows that $\pi^{-1}(Y)$ is a real analytic manifold.  The group $G$
acts real analytically on $\pi^{-1}(Y)$, since the action is just the 
restriction of the action on $M$.
\end{proof}

\section{Semianalytic and subanalytic subsets of real analytic manifolds}
\label{mani}

\noindent Semianalytic sets were first studied by S. {\L}ojasiewicz, see
\cite{Lo}.  Subanalytic sets are a generalization of semianalytic sets,
considered by Hironaka in \cite{Hi1}. We recall the definitions of
semianalytic and subanalytic sets.  See \cite{BM}, \cite{Hi1} and
\cite{Lo} for their basic properties. 
Some elementary properties are also proved in \cite{Ka}.

Let $M$ be a real analytic manifold and let $U$ be an open subset of
$M$. We denote by ${\rm C}^\omega(U)$ the set of all real analytic maps
$U\to {\mathbb{R}}$. The smallest family of subsets of $U$ containing
all the sets $\{ x\in U\mid f(x)>0\}$, where $f\in {\rm C}^\omega(U)$, which
is stable under finite intersection, finite union and complement, is denoted
by $S({\rm C}^\omega(U))$. 

\begin{definition} Let $M$ be a real analytic manifold. A subset $A$ of
$M$ is called {\it semianalytic} if every point $x\in M$ has a neighborhood $U$
such that $A\cap U\in S({\rm C}^\omega(U))$.
\end{definition}

Finite unions and finite
intersections of semianalytic sets are semianalytic and the complement
of a semianalytic set is semianalytic (Remark 2.2 in \cite{Hi1}).
Also, the closure and the interior of a semianalytic set are semianalytic
(Corollary 2.8 in \cite{BM}). Every connected component of a 
semianalytic set is semianalytic, and the family of the connected components of a
semianalytic set is locally finite (Corollary 2.7 in \cite{BM}).  
Moreover, the union of any
collection of connected components of a semianalytic set is semianalytic.

Let $A\subset M$. We say that $A$ is a {\it projection of a semianalytic set}
if there exists a real analytic manifold $N$ and a semianalytic subset
$B$ of $M\times N$ such that $A=p(B)$, where $p\colon M\times N\to M$
is the projection. We call $B$ {\it relatively compact}, if the closure ${\overline{B}}$
is compact.

\begin{definition}
\label{sub}
Let $M$ be a real analytic manifold. A subset $A$ of $M$ is called
{\it subanalytic} if every point $x\in M$ has a neighborhood $U$
such that $A\cap U$ is a projection of a relatively compact
semianalytic set.
\end{definition}

Every semianalytic set is subanalytic (Proposition 3.4.  in \cite{Hi1}).
For subanalytic sets that are not
semianalytic, see Example 2 on p. 453 in \cite{Hi1} and Examples 1 and 2
on p. 134--135 in \cite{Lo}.
Finite unions and and finite
intersections of subanalytic sets are subanalytic, and the complement of a
subanalytic set is subanalytic (Proposition 3.2 in \cite{Hi1}).
The closure and thus also the interior of a
subanalytic set is subanalytic  (Corollary 3.7.9 in \cite{Hi1}).
Connected components of a subanalytic set
are subanalytic (Corollary 3.7.10 in \cite{Hi1})
and  the family of the connected components of a subanalytic
set is locally finite (Proposition 3.6 in \cite{Hi1}). The union of
any collection of connected components of a subanalytic set
is subanalytic.

\begin{definition}
\label{submaps}
Let $M$ and $N$ be real analytic manifolds and let $A$ be a semianalytic
(subanalytic) subset of $M$. A continuous map
$f\colon A\to N$ is called semianalytic (subanalytic) if its graph ${\rm Gr}(f)$
is a semianalytic (subanalytic) subset of $M\times N$.
\end{definition}

Clearly, every real analytic map $M\to N$ is semianalytic and every semianalytic
map is subanalytic.

We mention yet another important property of subanalytic sets:

\begin{theorem}
\label{hiro}
Let $M$ and $N$ be real analytic manifolds, let $A$ be a subanalytic
subset of $M$ and let $B$ be a subanalytic subset of $N$. Let 
$f\colon M\to N$ be a real analytic map.  Then $f^{-1}(B)$ is a subanalytic
subset of $M$. If, in addition, $f$ is proper, then $f(A)$ is a subanalytic subset of $N$.
\end{theorem}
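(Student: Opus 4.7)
The plan is to attack both parts by a local argument built around the graph $\Gamma_f=\{(x,f(x)):x\in M\}$, which as a closed real analytic submanifold of $M\times N$ is semianalytic (locally the zero set of the real analytic functions $y_j-f_j(x)$). Two simple ingredients will be used throughout. First, the preimage of a semianalytic set under a real analytic map is semianalytic: this is immediate from the local description of semianalytic sets by real analytic equalities and inequalities, since these are preserved under composition with a real analytic map. Second, a finite union of projections of relatively compact semianalytic sets from different auxiliary manifolds can be repackaged as a single such projection by taking the disjoint union of the auxiliary manifolds.

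For $f^{-1}(B)$, fix $x_0\in M$ and choose relatively compact open neighborhoods $x_0\in U_0'\subset\overline{U_0'}\subset U_0$ in $M$. Then $f(\overline{U_0})$ is compact in $N$, and by subanalyticity of $B$, for each $y\in f(\overline{U_0})$ there is an open neighborhood $W_y\subset N$, a real analytic manifold $P_y$, and a relatively compact semianalytic $E_y\subset W_y\times P_y$ with $B\cap W_y=p_{W_y}(E_y)$. Extracting a finite subcover $W_1,\dots,W_k$ of $f(\overline{U_0})$ and setting
\[
F_j=\{(x,t)\in (U_0\cap f^{-1}(W_j))\times P_j:(f(x),t)\in E_j\},
\]
each $F_j$ is semianalytic as the preimage of $E_j$ under the real analytic map $(x,t)\mapsto(f(x),t)$, and $F_j\cap(U_0'\times P_j)$ is relatively compact in $U_0\times P_j$ since its closure lies in the compact set $\overline{U_0'}\times p_{P_j}(\overline{E_j})$. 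Because $f^{-1}(B)\cap U_0'=\bigcup_j p_{U_0'}\bigl(F_j\cap(U_0'\times P_j)\bigr)$, the second ingredient above presents $f^{-1}(B)\cap U_0'$ as a projection of a single relatively compact semianalytic set.

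For $f(A)$ under the hypothesis that $f$ is proper, the following intermediate fact is useful: for any real analytic map $g\colon Y\to X$ and any relatively compact semianalytic $D\subset Y$, the image $g(D)$ is subanalytic in $X$. This follows from $g(D)=p_X((D\times X)\cap\Gamma_g)$, since $(D\times X)\cap\Gamma_g$ is semianalytic and its closure is contained in the compact set $\overline{D}\times g(\overline{D})$. Now fix $y_0\in N$ and a relatively compact open neighborhood $V$ of $y_0$; by properness, $f^{-1}(\overline{V})$ is compact, so it can be covered by finitely many relatively compact open $U_i\subset M$ on which $A\cap U_i=p_{U_i}(D_i)$ for relatively compact semianalytic $D_i\subset U_i\times Q_i$. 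Any $x\in A$ with $f(x)\in V$ lies in some $U_i$, so $f(A)\cap V=\bigcup_i(f\circ p_{U_i})(D_i)\cap V$. Applying the intermediate fact to each real analytic map $f\circ p_{U_i}\colon U_i\times Q_i\to N$ shows each $(f\circ p_{U_i})(D_i)$ is subanalytic in $N$, and a finite union of subanalytic sets is subanalytic.

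The hard part in both arguments is careful bookkeeping of relative compactness in the correct ambient open manifold. In the inverse image part it is the shrinking from $U_0$ to $U_0'$ that guarantees the preimage sets $F_j$ stay inside a compact region of the auxiliary product; in the image part it is properness of $f$ that makes $f^{-1}(\overline{V})$ compact and thus coverable by finitely many charts, which is precisely why the image statement requires properness while the preimage statement does not.
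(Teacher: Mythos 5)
Your argument is correct in substance, but note that the paper does not actually prove this statement: Theorem \ref{hiro} is quoted directly from Hironaka (Proposition 3.8 in \cite{Hi1}), so the ``paper's proof'' is a citation and your write-up is a self-contained reconstruction of that result from the projection definition of subanalyticity. Your two ingredients (preimages of semianalytic sets under real analytic maps are semianalytic; finitely many projections can be merged via a disjoint union of auxiliary factors) are exactly the right ones, and the division of labour --- shrinking $U_0$ to $U_0'$ for the preimage statement, properness making $f^{-1}(\overline{V})$ compact for the image statement --- matches the standard proof. Two points deserve tightening. First, in the preimage part you need $F_j\cap(U_0'\times P_j)$ to be semianalytic in the \emph{full} product $M\times P_j$, not merely in the open set $(U_0\cap f^{-1}(W_j))\times P_j$: for this, take $E_j$ semianalytic in all of $N\times P_j$ (as the definition of ``projection of a relatively compact semianalytic set'' allows) and choose $U_0'$ and $W_j$ themselves semianalytic, e.g.\ coordinate balls, so that the cutting-down by these open sets preserves semianalyticity at boundary points. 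Second, in the image part the closure of $D_i$ may meet $\partial U_i\times Q_i$, so $D_i$ need not be relatively compact \emph{in} $U_i\times Q_i$; apply your intermediate fact to the globally defined map $f\circ p_M\colon M\times Q_i\to N$ instead of $f\circ p_{U_i}\colon U_i\times Q_i\to N$, which costs nothing since $D_i$ is relatively compact and semianalytic in $M\times Q_i$. With these adjustments the proof is complete; what your approach buys over the bare citation is an explicit, elementary verification usable without consulting \cite{Hi1}.
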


\begin{proof}
Proposition 3.8 in \cite{Hi1}.
\end{proof}

\begin{cor}
\label{mina}
Let $M$ and $N$ be real analytic manifolds, let $A$ be a subanalytic
subset of $M$ and let $B$ be a subanalytic subset of $N$. Let 
$f\colon M\to N$ be a subanalytic map.  Then $f^{-1}(B)$ is a subanalytic
subset of $M$. If, in addition, $f$ is proper, then $f(A)$ is a subanalytic subset of $N$.
\end{cor}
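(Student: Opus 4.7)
The plan is to reduce the corollary to Theorem \ref{hiro} by exploiting the subanalytic graph $\Gamma = \mathrm{Gr}(f) \subset M \times N$ (subanalytic by Definition \ref{submaps}) together with the continuity of $f$. Let $p_1 \colon M \times N \to M$ and $p_2 \colon M \times N \to N$ denote the real analytic projections. The two claims rest on the identities
\[
f^{-1}(B) = p_1\bigl(\Gamma \cap p_2^{-1}(B)\bigr), \qquad f(A) = p_2\bigl(\Gamma \cap p_1^{-1}(A)\bigr).
\]
The sets being projected are subanalytic in $M \times N$, since Theorem \ref{hiro} gives subanalyticity of $p_1^{-1}(A)$ and $p_2^{-1}(B)$ and subanalyticity is preserved under finite intersections.

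The main obstacle is that $p_1$ and $p_2$ are not proper on $M \times N$, so Theorem \ref{hiro} cannot be invoked verbatim to conclude subanalyticity of the images. The remedy is to localize so that the set to be projected has compact closure, turning the projection into an essentially proper map on that piece.

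For the inverse image statement, fix $x_0 \in M$ and, using continuity of $f$, pick relatively compact open neighborhoods $W \ni x_0$ in $M$ and $V \ni f(x_0)$ in $N$ with $f(W) \subset V$. Set $E := \Gamma \cap p_2^{-1}(B) \cap (W \times V)$; this is subanalytic in $M \times N$ and its closure lies in the compact set $\overline{W} \times \overline{V}$. Cover $\overline{E}$ by finitely many open sets $\Omega_1,\dots,\Omega_k \subset M \times N$ in which $E$ is a projection of a relatively compact semianalytic set (Definition \ref{sub}). Composing with $p_1$ expresses $f^{-1}(B) \cap W = p_1(E) = \bigcup_{j} p_1(E \cap \Omega_j)$ as a finite union of projections of relatively compact semianalytic sets, hence as a subanalytic subset of $W$. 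Since $x_0$ was arbitrary, $f^{-1}(B)$ is subanalytic in $M$.

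For the image statement, fix $y_0 \in N$ and choose a relatively compact open $W \ni y_0$. Properness of $f$ makes $f^{-1}(\overline{W})$ compact, so $E' := \Gamma \cap p_1^{-1}(A) \cap p_2^{-1}(W)$ is a subanalytic set whose closure is contained in the compact $f^{-1}(\overline{W}) \times \overline{W}$. Applying the same covering-and-projection argument to $p_2$ realizes $f(A) \cap W = p_2(E')$ as a finite union of projections of relatively compact semianalytic sets, hence as a subanalytic subset of $N$, which completes the proof.
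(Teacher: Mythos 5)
Your argument is sound in outline and is genuinely self-contained, whereas the paper disposes of this corollary by citing Corollary 4.23 of \cite{Ka} without reproducing a proof. Your route --- passing to the graph $\Gamma$, writing $f^{-1}(B)=p_1(\Gamma\cap p_2^{-1}(B))$ and $f(A)=p_2(\Gamma\cap p_1^{-1}(A))$, and then localizing so that the set being projected is relatively compact before unwinding Definition \ref{sub} --- is the standard way such statements are derived from Theorem \ref{hiro}, and all the set-theoretic identities you use are correct (in particular $f(W)\subset V$ guarantees $p_1(E)=f^{-1}(B)\cap W$, and properness gives $\overline{E'}\subset f^{-1}(\overline{W})\times\overline{W}$ compact). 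The compactness-plus-finite-cover step correctly turns each $p_1(E\cap\Omega_j)$ into a single projection of a relatively compact semianalytic subset of $M\times(N\times P_j)$, and finite unions of subanalytic sets are subanalytic, so the conclusion follows.

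One point needs tightening. You form $E=\Gamma\cap p_2^{-1}(B)\cap(W\times V)$ and assert it is subanalytic in $M\times N$, but an arbitrary relatively compact open set $W$ (or $V$) need not be subanalytic, and intersecting a subanalytic set with a non-subanalytic open set can destroy subanalyticity at boundary points of $W\times V$ --- precisely the points of $\overline{E}\setminus(W\times V)$ around which your finite cover must work. The fix is trivial and standard: choose $W$ and $V$ to be coordinate balls (or invoke the same device as Lemma \ref{ymp}), so that $W$, $V$, hence $W\times V$ and $M\times W$, are semianalytic, making $E$ and $E'$ honest finite intersections of subanalytic sets. The same remark applies to $p_2^{-1}(W)$ in the image argument. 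With that adjustment the proof is complete.
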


\begin{proof}
Corollary 4.23 in \cite{Ka}.
\end{proof}

Notice that both in Proposition  3.8 in \cite{Hi1} and in Corollary 4.23 in
\cite{Ka}, the statements about the inverse image $f^{-1}(B)$ are for
proper maps. However, properness is not used in the proofs and the
satements also hold when $f$ is not proper.

\section{Semianalytic and subanalytic subsets of orbifolds}
\label{semisubdefi}

\noindent In this section we define semianalytic and subanalytic
subsets for orbifolds and show that the basic properties
that hold in the manifold case, mentioned in Section \ref{mani},
also hold in the orbifold case.

\begin{definition}
\label{maar1}
Let $X$ be a real analytic orbifold. A subset $A$ of $X$ is called
{\it semianalytic}  ({\it subanalytic}) if  for every  point $x$ of $X$ there is an 
orbifold chart $(\tilde{V},G,\varphi)$ of $X$ such that
$x\in V=\varphi(\tilde{V})$ and $\varphi^{-1}(A\cap V)$ is a 
semianalytic  (subanalytic) subset of $\tilde{V}$. 
\end{definition}

It is clear from Definition \ref{maar1} that every semianalytic subset of $X$
is also subanalytic.

Let $M$ be a real analytic manifold. The trivial group
acts on each chart of $M$,  and it follows that $M$ is a
real analytic orbifold. Thus a subset of $M$ can be semianalytic
(subanalytic) either in the manifold sense or in the orbifold sense.
In fact, for a real analytic manifold, the two definitions of semianalyticity
(subanalyticity) are equivalent:

\begin{theorem}
\label{samat}
Let $M$ be a real analytic manifold and let $A\subset M$. Then
$A$ is semianalytic (subanalytic) in the manifold sense if and only if it
is semianalytic (subanalytic) in the orbifold sense.
\end{theorem}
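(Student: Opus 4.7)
The plan is to reduce everything to the observation that when $M$ is viewed as a real analytic orbifold, every orbifold chart is equivalent to one of the form $(\tilde{V},\{e\},\varphi)$ with $\varphi\colon\tilde{V}\to V$ a real analytic diffeomorphism from an open subset $\tilde{V}$ of ${\mathbb{R}}^n$ onto an open subset $V$ of $M$. Both notions of semianalyticity (subanalyticity) are local, so the equivalence of the two definitions boils down to invariance under such diffeomorphisms. The key point is that if $\varphi\colon \tilde{V}\to V$ is a real analytic diffeomorphism between open subsets of real analytic manifolds, then $f\mapsto f\circ\varphi$ sends $C^{\omega}(V)$ bijectively onto $C^{\omega}(\tilde{V})$ and $\varphi^{-1}(\{f>0\})=\{f\circ\varphi>0\}$; hence $B\in S(C^{\omega}(V))$ iff $\varphi^{-1}(B)\in S(C^{\omega}(\tilde{V}))$. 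The analogous statement for subanalyticity follows because $\varphi\times{\rm id}_N$ is again a real analytic diffeomorphism that commutes with the projection onto the first factor and preserves relative compactness.

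For the forward implication, suppose $A$ is semianalytic (subanalytic) in the manifold sense and fix $x\in M$. Pick an orbifold chart $(\tilde{V},\{e\},\varphi)$ with $x\in V=\varphi(\tilde{V})$, and choose a neighborhood $U$ of $x$, supplied by the manifold definition, on which $A\cap U\in S(C^{\omega}(U))$ (or is a projection of a relatively compact semianalytic set). After shrinking $\tilde{V}$ we may assume $V\subset U$; the restriction to $V$ of the Boolean combination defining $A\cap U$ (respectively, the intersection of the projecting semianalytic set in $U\times N$ with $V\times N$) exhibits $A\cap V$ as semianalytic (subanalytic) in $V$. Applying the diffeomorphism invariance above to $\varphi$ then gives that $\varphi^{-1}(A\cap V)$ is semianalytic (subanalytic) in $\tilde{V}$, which is precisely the orbifold-sense condition at $x$.

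For the reverse implication, suppose $A$ is semianalytic (subanalytic) in the orbifold sense and fix $x\in M$. There is an orbifold chart $(\tilde{V},\{e\},\varphi)$ with $x\in V$ and $\varphi^{-1}(A\cap V)$ semianalytic (subanalytic) in $\tilde{V}$. Applying the manifold-sense definition inside $\tilde{V}$ at the point $\varphi^{-1}(x)$ yields a neighborhood $\tilde{U}'$ of $\varphi^{-1}(x)$ in $\tilde{V}$ on which $\varphi^{-1}(A\cap V)\cap\tilde{U}'$ belongs to $S(C^{\omega}(\tilde{U}'))$ (respectively, is a projection of a relatively compact semianalytic set). Pushing forward through $\varphi\vert\tilde{U}'$, which is again a real analytic diffeomorphism, shows that $A\cap\varphi(\tilde{U}')$ satisfies the corresponding condition on the open neighborhood $\varphi(\tilde{U}')$ of $x$ in $M$, verifying the manifold-sense definition at $x$. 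There is no substantial obstacle here; the only thing to be careful about is passing from a Boolean combination on a larger open set $U$ to one on a smaller open set $V$ and checking that this is compatible with $\varphi$-pullback, but this is immediate from the definitions of $S(C^{\omega}(\,\cdot\,))$ and of projection of a relatively compact semianalytic set.
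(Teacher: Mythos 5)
Your proposal is correct and follows essentially the same route as the paper: identify the manifold charts with orbifold charts having trivial group, so that $\varphi$ is a real analytic diffeomorphism, and then transfer semianalyticity (subanalyticity) back and forth using its local character and its invariance under real analytic diffeomorphisms. The extra detail you give (the pullback bijection on ${\rm C}^\omega$ and the shrinking/restriction step) only makes explicit what the paper cites as the standard restriction and diffeomorphism-invariance properties in the manifold case.
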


\begin{proof}
Assume first that $A$ is semianalytic (subanalytic) in the manifold
sense. Let $x\in M$ and let $(U,\varphi)$ be a chart of $M$ such that
$x\in \varphi(U)$. Then $\varphi(U)$ is an open subset of $M$
and $A\cap\varphi(U)$ is semianalytic (subanalytic) in $\varphi(U)$.
Since $\varphi\colon U\to \varphi(U)$ is a real analytic diffeomorphism,
it follows that
$\varphi^{-1}(A\cap \varphi(U))$ is a semianalytic (subanalytic)
subset of $U$. Thus $A$ is semianalytic (subanalytic) in the
orbifold sense.

Assume then that $A$ is semianalytic (subanalytic) in the orbifold sense.
Let $x\in M$. Then $M$ has a chart $(U,\varphi)$ such that $x\in
\varphi(U)$ and $\varphi^{-1}(A\cap\varphi(U))$ is semianalytic
(subanalytic) in $U$. It follows that $A\cap\varphi(U)$ is semianalytic
(subanalytic) in $\varphi(U)$ in the manifold sense. 
Thus $\varphi(U)$ is a neighborhood
of $x$ such that $A\cap\varphi(U)$ is semianalytic (subanalytic)
in $\varphi(U)$, and it follows that $A$ is a semianalytic (subanalytic)
in the manifold sense.
\end{proof}

\begin{lemma}
\label{apu}
Let $X$ be a real analytic orbifold and let $A$ be a semianalytic
(subanalytic) subset of $X$. Let $(\tilde{V}, G,\varphi)$ be an
orbifold chart such that $\varphi^{-1}(A\cap V)$ is a semianalytic
(subanalytic) subset of $\tilde{V}$, where $V=\varphi(\tilde{V})$.
Let $(\lambda, h)\colon (\tilde{U}, H,\psi)\to (\tilde{V}, G,\varphi)$ be an
embedding between two orbifold charts and let $U=\psi(\tilde{U})$. 
Then $\psi^{-1}(A\cap U)=
\lambda^{-1}( \varphi^{-1}(A\cap V))$ is a semianalytic (subanalytic)
subset of $\tilde{U}$.
\end{lemma}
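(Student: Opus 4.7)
The plan is to first verify the displayed set equality and then derive semi-/subanalyticity from standard results about real analytic manifolds. For the equality, I would use the defining property of an embedding of orbifold charts, namely $\varphi\circ\lambda=\psi$. For any $x\in\tilde U$, the point $\psi(x)=\varphi(\lambda(x))$ lies in $U\subset V$ automatically, so $\psi(x)\in A\cap U$ iff $\varphi(\lambda(x))\in A\cap V$ iff $\lambda(x)\in\varphi^{-1}(A\cap V)$. This is exactly $x\in\lambda^{-1}(\varphi^{-1}(A\cap V))$, giving the equality.

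Next I would record the key geometric observation that $\lambda\colon\tilde U\to\tilde V$ is a real analytic \emph{open} embedding. Both charts are $n$-dimensional with the same $n$ (the dimension of the orbifold), and $\lambda$ is an injective real analytic immersion between connected open subsets of $\mathbb R^n$; by invariance of domain (or equivalently because $\lambda$ is a real analytic injective immersion between manifolds of equal dimension), $\lambda(\tilde U)$ is open in $\tilde V$ and $\lambda$ is a real analytic diffeomorphism onto its image.

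For the subanalytic case, I would simply invoke Theorem \ref{hiro}: since $\varphi^{-1}(A\cap V)$ is subanalytic in $\tilde V$ and $\lambda\colon\tilde U\to\tilde V$ is real analytic, $\lambda^{-1}(\varphi^{-1}(A\cap V))$ is subanalytic in $\tilde U$. For the semianalytic case, Theorem \ref{hiro} does not apply directly (inverse images of semianalytic sets by arbitrary real analytic maps are only subanalytic in general), so I would use the open-embedding structure: $\varphi^{-1}(A\cap V)\cap\lambda(\tilde U)$ is the intersection of a semianalytic set with an open set, hence semianalytic in $\tilde V$, and its restriction to the open subset $\lambda(\tilde U)$ is semianalytic there. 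Because $\lambda$ is a real analytic diffeomorphism $\tilde U\to\lambda(\tilde U)$, and semianalyticity is a local condition expressed by real analytic inequalities that is preserved under composition with a real analytic diffeomorphism, the pullback $\lambda^{-1}(\varphi^{-1}(A\cap V))$ is semianalytic in $\tilde U$.

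The only step that requires any care is the semianalytic case, since the naive analogue of Theorem \ref{hiro} for semianalytic sets fails; the point to emphasize is that $\lambda$ being an embedding between equidimensional charts upgrades it to an open embedding, so one is really pulling back by a local real analytic diffeomorphism, under which semianalyticity is clearly preserved.
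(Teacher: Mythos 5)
Your proposal is correct and follows essentially the same route as the paper: the paper's proof likewise rests on $\lambda(\tilde U)$ being open in $\tilde V$, restricting the semianalytic (subanalytic) set to that open image, and pulling back along the real analytic diffeomorphism $\lambda\colon\tilde U\to\lambda(\tilde U)$. Your extra details (the verification of the set equality via $\varphi\circ\lambda=\psi$, and the use of Theorem \ref{hiro} in the subanalytic case) are fine but not a different method.
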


\begin{proof}
The image $\lambda(\tilde{U})$ is open in $\tilde{V}$. 
 Then $\lambda(\tilde{U})\cap \varphi^{-1}(A\cap V)$
is semianalytic (subanalytic) in $\lambda(\tilde{U})$. Since
$\lambda$ is a real analytic diffeomorphism onto the image
$\lambda(\tilde{U})$, it follows that 
$\psi^{-1}(A\cap U)=
\lambda^{-1}( \varphi^{-1}(A\cap V))$
is a semianalytic (subanalytic) subset of $\tilde{U}$.
\end{proof}

The following theorem lists some of the basic properties of
semianalytic and subanalytic subsets of real analytic orbifolds. 
All properties follow from the
corresponding properties of semianalytic and subanalytic subsets
of real analytic manifolds.

\begin{theorem}
\label{basicprop}
Let $X$ be a real analytic orbifold. Then
{\begin{enumerate}
\item Finite unions of semianalytic (subanalytic) subsets of $X$
are semianalytic (subanalytic).
\item Finite intersections of semianalytic (subanalytic) subsets of $X$
are semianalytic (subanalytic).
\item A complement of a semianalytic (subanalytic) subset 
of $X$ is semianalytic (subanalytic).
\item A closure of a semianalytic (subanalytic) subset 
of $X$ is semianalytic (subanalytic).
\item An interior of a semianalytic (subanalytic) subset of $X$ is
semianalytic (subanalytic).
\item Every connected component of a semianalytic (subanalytic)
set is semianalytic (subanalytic).
\item The family of the connected components of a semianalytic
(subanalytic) set is locally finite.
\end{enumerate}}
\end{theorem}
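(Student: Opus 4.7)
The plan is to reduce each of the seven statements to the corresponding fact in the manifold case recalled in Section \ref{mani}, using Lemma \ref{apu} to move between orbifold charts. The core mechanism is this: for a given point $x\in X$ and the finitely many sets $A_1,\dots,A_k$ under consideration, first pick orbifold charts witnessing semianalyticity (subanalyticity) of each $A_i$ at $x$; then invoke Condition $3(b)$ of Definition \ref{ensin} to obtain a single chart $(\tilde V,G,\varphi)$ around $x$ admitting an embedding into each of those charts; and finally apply Lemma \ref{apu} so that every preimage $\varphi^{-1}(A_i\cap V)$ is semianalytic (subanalytic) in $\tilde V$. The manifold facts then settle the problem inside this common chart, after which one descends back to $X$ via the same chart.

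For (1)--(3), the descent is immediate because $\varphi^{-1}$ commutes with finite unions, finite intersections, and, applied to $(X\setminus A)\cap V$, with complements. For (4) and (5), the key observation is that the quotient map $\pi\colon\tilde V\to\tilde V/G\cong V$ is both open and closed (since $G$ is finite: orbits of open sets are open, and $\pi^{-1}(\pi(C))=\bigcup_{g\in G}gC$ is a finite union of closed sets when $C$ is closed). Combined with the $G$-invariance of $\varphi^{-1}(A\cap V)$, this yields $\varphi^{-1}(\overline{A\cap V})=\overline{\varphi^{-1}(A\cap V)}$ in $\tilde V$ and the analogous identity for interiors; together with $\overline A\cap V=\overline{A\cap V}^V$ (as $V$ is open in $X$), these reduce (4) and (5) to the manifold results.

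For (6), a connected component $C$ of $A$ intersects $V$ in a union of connected components of $A\cap V$: if a component $D$ of $A\cap V$ meets $C$, then $D\cup C\subseteq A$ is connected and therefore contained in $C$. Hence $\varphi^{-1}(C\cap V)$ is a union of connected components of $\varphi^{-1}(A\cap V)$, which is semianalytic (subanalytic) by the manifold statement. For (7), the manifold local finiteness of the connected components of $\varphi^{-1}(A\cap V)$ transfers through the finite-to-one open map $\pi$ to local finiteness of the components of $A\cap V$ in $V$; and since any component of $A$ that meets a small neighborhood $U\subseteq V$ of $x$ must contain a component of $A\cap V$ that meets $U$, local finiteness of components of $A$ at $x$ follows.

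I do not anticipate any serious obstacle. The only delicate points are the commutation of $\varphi^{-1}$ with closure and interior, which rests on the openness and closedness of $\pi$ for a finite group action, and the bookkeeping in (6) and (7) relating components of $A$ to components of $A\cap V$ via the fact that $V$ is open in $X$. Everything else is a verbatim transcription of the manifold arguments into the single chart $\tilde V$.
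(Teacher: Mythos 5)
Your proposal is correct and follows essentially the same route as the paper: pass to a common orbifold chart via Condition 3(b) and Lemma \ref{apu}, reduce each claim to the corresponding manifold fact in the chart, and descend using that $\varphi$ is the quotient by the finite group $G$ (hence open, with $\varphi^{-1}$ commuting with closures, and with components and local finiteness transferring as you describe). The only cosmetic differences are that you spell out the closure/interior identities and the component bookkeeping in (6)--(7) in more detail than the paper, and treat the interior directly rather than as the complement of the closure of the complement; neither changes the argument.
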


\begin{proof}
Let $A_i$, $i=1,\ldots, n$, be semianalytic (subanalytic) subsets 
of $X$ and let $x\in X$. For every $i$, there is an orbifold chart
$(\tilde{V}_i, G_i,\varphi_i)$, where $x\in V_i=\varphi_i(\tilde{V}_i)$,
such that $\varphi^{-1}_i(A_i\cap V_i)$ is semianalytic (subanalytic)
in $\tilde{V}_i$. Let $(\tilde{V}, G, \varphi)$  be an orbifold chart,
where $x\in V=\varphi(\tilde{V})$, and such that there is an embedding
$(\lambda_i, h_i)\colon (\tilde{V}, G, \varphi)\to (\tilde{V}_i, G_i,\varphi_i)$,
for every $i=1,\ldots, n$. By Lemma \ref{apu}, $\varphi^{-1}(A_i\cap V)$
is semianalytic (subanalytic) in $\tilde{V}$, for every $i=1,\ldots, n$.
But then
$$
\varphi^{-1}(\bigcup_{i=1}^nA_i\cap V)=\bigcup_{i=1}^n\varphi^{-1}(A_i\cap V)
$$
is semianalytic (subanalytic) in $\tilde{V}$. Thus $\bigcup_{i=1}^nA_i$
is semianalytic (subanalytic) in $X$ and Claim (1) follows. The proof of
Claim (2) is similar.

Let $A$ be a semianalytic (subanalytic) subset of $X$ and let $x\in X$.
Then there is an orbifold chart $(\tilde{V}, G, \varphi)$ of $X$ such that
$x\in V=\varphi(\tilde{V})$ and $\varphi^{-1}(A\cap V)$ is semianalytic
(subanalytic) in $\tilde{V}$. But then $\varphi^{-1}( (X\setminus A)\cap V)=
\tilde{V}\setminus \varphi^{-1}(A\cap V)$ is semianalytic (subanalytic) in
$\tilde{V}$. Consequently, $X\setminus A$ is semianalytic (subanalytic)
in $X$ and Claim (3) follows.

To prove Claim (4), let $A$ be a semianalytic (subanalytic) subset
of $X$, and let $x\in X$. Again, there exists an orbifold chart
$(\tilde{V}, G, \varphi)$ such that $x\in V=\varphi(\tilde{V})$ and
$\varphi^{-1}(A\cap V)$ is a semianalytic (subanalytic) subset
of $\tilde{V}$.  But then the closure $\overline{\varphi^{-1}(A\cap V)}$
is also a semianalytic (subanalytic) subset of $\tilde{V}$. Since
$\varphi$ can be considered as the natural projection $\tilde{V}\to
\tilde{V}/G\approx V$, it follows that $\varphi^{-1}(\overline{A}\cap V)
=\overline{\varphi^{-1}(A\cap V)}$. Thus the closure
$\overline{A}$ is a semianalytic (subanalytic) subset of $X$.

Since the interior of a set is the complement of the closure of its
complement, Claim (5) follows from Claims (3) and (4).

Let $A$ be a semianalytic (subanalytic) subset of $X$ and let 
$A_0$ be a connected component of $A$. Let $x\in X$ and let
$(\tilde{V},G,\varphi)$ be an orbifold chart such that $x\in \varphi(\tilde{V})=
V$ and $\varphi^{-1}(A\cap V)$ is a semianalytic (subanalytic) subset
of $\tilde{V}$. Then every connected component of $\varphi^{-1}(A\cap V)$ is
semianalytic (subanalytic) in $\tilde{V}$ and $\varphi^{-1}(A_0\cap V)$
is a union of some connected components of  $\varphi^{-1}(A\cap V)$.
Thus $\varphi^{-1}(A_0\cap V)$ is a semianalytic (subanalytic) subset
on $\tilde{V}$ and it follows that $A_0$ is semianalytic (subanalytic).
This proves Claim (6).
Let then $\tilde{x}\in\tilde{V}$ be such that $x=\varphi(\tilde{x})$. 
Then $\tilde{x}$ has a neighborhood $U$ in $\tilde{V}$ such that
$U$ intersects only finitely many of the connected components 
of $\varphi^{-1}(A\cap V)$. But then $\varphi(U)$ is a neighborhood of
$x$ that intersects only finitely many connected components of $A$.
Thus Claim (7) follows. 
\end{proof}

\section{Semianalytic and subanalytic maps in the orbifold case}
\label{kuvausdef}

\begin{definition}
\label{maar2}
A  map $f\colon X\to Y$ between two real analytic orbifolds is 
called  {\it real analytic}
(resp. {\it semianalytic} or  {\it subanalytic})
if for every $x\in X$ there are orbifold charts $(\tilde{U}, G,\varphi)$
and $(\tilde{V}, H,\psi)$, where $x\in U$ and $f(x)\in V$,  a homomorphism
$\theta\colon G\to H$ and a $\theta$-equivariant real analytic
(resp. semianalytic or subanalytic) map $\tilde{f}\colon
\tilde{U}\to\tilde{V}$ making the following diagram commute:

$$\begin{CD}
\tilde{U}
@>\tilde{f}>>  \tilde{V}\\
@VVV     @VVV\\
\tilde{U}/G
@>{}>>  \tilde{V}/H\\
@VVV     @VVV\\
 U      
 @>f\vert U>>   V\\
\end{CD}.
$$
\end{definition}

It follows  that a real analytic (resp. semianalytic or subanalytic)
map between two orbifolds is automatically continuous. Notice that a continuous
map $f\colon X\to Y$ does not necessarily have continuous local lifts  $\tilde{f}$ making the
diagram in \ref{maar2} commute. A continuous map that does have such local
lifts is called an {\it orbifold map}. Thus, in particular, all real analytic, semianalytic
and subanalytic maps between orbifolds are orbifold maps.

A real analytic map $f\colon X\to Y$ is called a real analytic {\it  diffeomorphism}, if it 
is a bijection with  a real analytic inverse map.

\begin{lemma}
\label{ennengraafia}
Let $M$ be a real analytic manifold and let $G$ be a finite group
acting real analytically on $M$. Let $I\subset G$ and let $A_I$ be the
subset of $M$ consisting of points $x$ such that $gx=x$ if and only if
$g\in I$. Then $A_I$ is a semianalytic subset of $M$.
 \end{lemma}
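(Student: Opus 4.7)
The plan is to express $A_I$ as a finite Boolean combination of the fixed point sets ${\rm Fix}(g)=\{x\in M\mid gx=x\}$ for $g\in G$, and then to verify that each such fixed set is semianalytic. Since the semianalytic subsets of $M$ are closed under finite unions, finite intersections and complements, and $G$ is finite, this will suffice.

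The key step is to show that for each $g\in G$, the set ${\rm Fix}(g)$ is semianalytic in $M$. I would argue locally. Fix $x_0\in M$. If $gx_0\neq x_0$, then by continuity of the action there is a neighborhood $W$ of $x_0$ with $gy\neq y$ for every $y\in W$, so ${\rm Fix}(g)\cap W=\emptyset$ is trivially semianalytic. If $gx_0=x_0$, I would choose a real analytic chart $\psi\colon W\to {\mathbb{R}}^n$ around $x_0$, shrunk so that $gW$ is contained in the domain of a real analytic chart $\psi'$ around $x_0=gx_0$ extending $\psi$. In local coordinates $g$ is then given by real analytic functions, and ${\rm Fix}(g)\cap W$ is the common zero set of the $n$ real analytic functions $f_i=\psi'_i\circ g-\psi_i$ on $W$. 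Each zero set $\{f_i=0\}$ equals $\{f_i>0\}^c\cap\{-f_i>0\}^c$, so it lies in $S({\rm C}^\omega(W))$; hence so does the finite intersection ${\rm Fix}(g)\cap W$, and ${\rm Fix}(g)$ is semianalytic.

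Once this is in hand, one writes
$$
A_I=\Bigl(\bigcap_{g\in I}{\rm Fix}(g)\Bigr)\cap\Bigl(\bigcap_{g\in G\setminus I}\bigl(M\setminus{\rm Fix}(g)\bigr)\Bigr),
$$
which is a finite intersection of semianalytic sets and complements of semianalytic sets. By the standard closure properties of semianalytic subsets of a real analytic manifold recalled in Section \ref{mani}, $A_I$ is therefore semianalytic.

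I do not anticipate any real obstacle: once the individual fixed sets are known to be semianalytic the conclusion is pure bookkeeping, and the finiteness of $G$ is essential only to keep the Boolean combination finite. The one point requiring care is the chart choice in the local description of ${\rm Fix}(g)$ at a fixed point of $g$, where one must shrink $W$ enough that $gW$ lies in a common analytic coordinate system so that $\psi'\circ g-\psi$ genuinely makes sense as a tuple of real analytic functions on $W$.
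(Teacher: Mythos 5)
Your proposal is correct, and it uses the same Boolean decomposition of $A_I$ into fixed-point sets as the paper does; the difference lies in how the key fact, semianalyticity of ${\rm Fix}(g)=A_g$, is established. The paper argues globally: it takes a proper real analytic embedding $e\colon M\to{\mathbb{R}}^n$ (Grauert--Morrey) and observes that $A_g=f_g^{-1}(0)$ for the single global real analytic function $f_g(x)=\Vert e(gx)-e(x)\Vert^2$, so $A_g$ is semianalytic in one stroke. You instead argue locally in charts, writing ${\rm Fix}(g)\cap W$ as the common zero set of the component functions $\psi'_i\circ g-\psi_i$, which is exactly what the definition of semianalyticity asks for and avoids invoking the embedding theorem; the one point needing care, which you flag correctly, is to shrink $W$ so that $W$ and $gW$ lie in the domain of a single injective chart $\psi'$ with $\psi=\psi'\vert W$, so that the vanishing of $\psi'\circ g-\psi$ on $W$ really characterizes the fixed points. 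Your route is more elementary and purely local (it needs no global embedding of $M$), while the paper's is shorter and produces one global analytic equation for each $A_g$; both are complete proofs.
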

 
 \begin{proof}
 Notice that $A_I\not=\emptyset$ if and only if $I$ equals the
 isotropy subgroup of some point in $M$. For every $g\in G$,
 let $A_g=\{ x\in M\mid gx=x\}$. Then
 $$
 A_I=(\bigcap_{g\in I}A_g)\cap(\bigcap_{g\in G\setminus I}M\setminus A_g).
 $$
 Let $e\colon M\to {\mathbb{R}}^n$ be a proper real analytic embedding in
 some euclidean space, and let
 $$
 f_g\colon M\to{\mathbb{R}},\,\,\, x\mapsto \Vert e(gx)-e(x)\Vert^2.
 $$
 Then $f_g$ is a real analytic map and $A_g=f^{-1}_g(0)$.
 Thus $A_g$ is semianalytic. By the complement rule, also $M\setminus A_g$ is
 semianalytic. It follows that $A_I$ is semianalytic as an intersection of
 finitely many semianalytic sets.
 \end{proof}

Compare the following theorem to Definition \ref{submaps}:

\begin{theorem}
\label{graafi}
Let $X$ and $Y$ be real analytic orbifolds and let $f\colon X\to Y$ be a
continuous orbifold map. Then
 $f$ is semianalytic (subanalytic) if and only if the graph ${\rm Gr}(f)$ is a
semianalytic (subanalytic) subset of $X\times Y$. 
\end{theorem}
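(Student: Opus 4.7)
The plan rests on the following identity in the product orbifold chart $(\tilde U \times \tilde V, G \times H, \varphi \times \psi)$ around $(x, f(x)) \in X \times Y$, where $(\tilde U, G, \varphi)$ and $(\tilde V, H, \psi)$ are orbifold charts and $\tilde f : \tilde U \to \tilde V$ is any $\theta$-equivariant continuous lift of $f\vert U$:
\[
(\varphi \times \psi)^{-1}\bigl(\mathrm{Gr}(f) \cap (U \times V)\bigr) = \bigcup_{h \in H} \mathrm{Gr}(h \circ \tilde f).
\]
This follows because $\psi(\tilde v_1) = \psi(\tilde v_2)$ exactly when $\tilde v_2 = h\tilde v_1$ for some $h \in H$, combined with the commuting diagram of Definition \ref{maar2}. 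Both directions of the theorem will be derived from this identity.

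For the forward direction, Definition \ref{maar2} supplies a $\theta$-equivariant semianalytic (subanalytic) lift $\tilde f$. Each $\mathrm{Gr}(h \circ \tilde f) = (\mathrm{id}_{\tilde U} \times h)(\mathrm{Gr}(\tilde f))$ is the image of a semianalytic (subanalytic) set under the real analytic diffeomorphism $\mathrm{id}_{\tilde U} \times h$, hence is semianalytic (subanalytic), and finite unions remain so by Theorem \ref{basicprop}. This shows $\mathrm{Gr}(f)$ is semianalytic (subanalytic) at $(x,f(x))$. At other points $(x,y) \notin \mathrm{Gr}(f)$, the closedness of $\mathrm{Gr}(f)$ (from continuity of $f$) lets us pick a product chart disjoint from $\mathrm{Gr}(f)$, for which the preimage is empty.

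For the reverse direction, since $f$ is an orbifold map, there is a $\theta$-equivariant continuous lift $\tilde f$. By hypothesis and Lemma \ref{apu}, the set $S := \bigcup_h \mathrm{Gr}(h \circ \tilde f)$ is semianalytic (subanalytic) in $\tilde U \times \tilde V$; we must extract $\mathrm{Gr}(\tilde f)$ itself. The strategy is to exploit the regular locus $\tilde V_{\mathrm{reg}} := \tilde V \setminus \bigcup_{e \ne h \in H} \tilde V^h$, a semianalytic open dense subset, and $\tilde U_0 := \tilde f^{-1}(\tilde V_{\mathrm{reg}})$. On $\tilde U_0 \times \tilde V_{\mathrm{reg}}$ the sheets $\mathrm{Gr}(h \circ \tilde f)$ are pairwise disjoint (since $h_1\tilde f(\tilde u) = h_2\tilde f(\tilde u)$ would force $h_2^{-1}h_1 \in H_{\tilde f(\tilde u)} = \{e\}$), so by Theorem \ref{basicprop} the set $S \cap (\tilde U_0 \times \tilde V_{\mathrm{reg}})$ decomposes into a locally finite family of semianalytic (subanalytic) connected components, parametrized by pairs (element of $H$, connected component of $\tilde U_0$). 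The subunion corresponding to $h = e$ is a union of such components, hence semianalytic (subanalytic), and by continuity its closure in $\tilde U \times \tilde V$ equals $\mathrm{Gr}(\tilde f)$, provided $\tilde U_0$ is dense in $\tilde U$.

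The main obstacle is precisely this density condition, since a continuous lift $\tilde f$ may map an open subset of $\tilde U$ into the singular locus. I expect to handle it by induction on $(\dim \tilde V, |H|)$ in lexicographic order: if $\tilde U_0$ is not dense, then on some open subset $\tilde f$ lands in a fixed-point submanifold $\tilde V^K$ with $\{e\} \ne K \le H$, and the analysis is redone inside $\tilde U \times \tilde V^K$ under the effective action of $N_H(K)/K$, in which both $\dim \tilde V^K < \dim \tilde V$ and $|N_H(K)/K| < |H|$. The $\theta$-equivariance persists because $\tilde f(\tilde U) \subset \tilde V^K$. Since $\tilde V^K$ is a closed semianalytic submanifold of $\tilde V$, a set semianalytic (subanalytic) in $\tilde U \times \tilde V^K$ remains so in $\tilde U \times \tilde V$, and the base case is immediate. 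Iterating delivers the required semianalytic (subanalytic) lift and completes the proof.
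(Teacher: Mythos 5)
Your forward direction is correct and is exactly the paper's argument: the identity $(\varphi\times\psi)^{-1}(\mathrm{Gr}(f)\cap(U\times V))=\bigcup_{h\in H}\mathrm{Gr}(h\circ\tilde f)$ together with the fact that each $h$ is a real analytic diffeomorphism. The problem is in the reverse direction, and it is concentrated precisely where you flag it yourself: the treatment of the singular strata. Two remarks first on the regular stratum. The set $\tilde U_0=\tilde f^{-1}(\tilde V_{\mathrm{reg}})$ is \emph{not} known to be semianalytic ($\tilde f$ is at this stage only continuous), so $S\cap(\tilde U_0\times\tilde V_{\mathrm{reg}})$ is a priori semianalytic only in the open set $\tilde U_0\times\tilde V_{\mathrm{reg}}$; you should instead observe that $S\cap(\tilde U_0\times\tilde V_{\mathrm{reg}})=S\cap(\tilde U\times\tilde V_{\mathrm{reg}})$ (since $\tilde V_{\mathrm{reg}}$ is $H$-invariant), which \emph{is} semianalytic in $\tilde U\times\tilde V$ because $\tilde V_{\mathrm{reg}}$ is cut out by conditions on the second variable alone. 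With that repair, $\mathrm{Gr}(\tilde f)\cap(\tilde U\times\tilde V_{\mathrm{reg}})$ is a union of connected components of a globally semianalytic set and all is well. (Also note $\tilde V_{\mathrm{reg}}=\emptyset$ whenever $\ker(H)\neq\{e\}$, which the paper's definition allows.)

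The genuine gap is the induction. When $\tilde U_0$ is not dense you pass to open subsets $W\subset\tilde U$ with $\tilde f(W)\subset\tilde V^K$ and produce pieces $\mathrm{Gr}(\tilde f|_W)$ that are semianalytic only in $W\times\tilde V^K$, where $W$ is an open set defined via the merely continuous $\tilde f$ and hence not semianalytic. Your assembly then takes closures in $\tilde U\times\tilde V$. But the closure of a set that is semianalytic only in an open subset need not be semianalytic in the ambient manifold (the $\{(x,\sin(1/x)):x>0\}$ phenomenon); the paper's quoted fact that closures of semianalytic sets are semianalytic applies only to sets semianalytic in the whole space. So ``Iterating delivers the required lift'' does not follow from what you have established, and nothing in your argument controls $\overline{\mathrm{Gr}(\tilde f|_W)}$ near $\partial W$. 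A second, smaller defect: the identity $S\cap(\tilde U\times\tilde V^K)=\bigcup_{\bar h\in N_H(K)/K}\mathrm{Gr}(\bar h\tilde f)$ requires $\tilde f(\tilde U)\subset\tilde V^K$, which holds only after restricting to $W$, so you cannot run the inductive step on all of $\tilde U\times\tilde V^K$. The fix is to stratify the \emph{target} rather than the source, so that no closure or density argument is needed: for each $I\subset H$ let $A_I=\{(z,w):hw=w\Leftrightarrow h\in I\}$, which is semianalytic by a direct argument with the functions $w\mapsto\Vert e(hw)-e(w)\Vert^2$ (this is the paper's Lemma \ref{ennengraafia}); then $B_I=S\cap A_I$ is globally semianalytic, $\mathrm{Gr}(\tilde f)\cap A_I$ is relatively open and closed in $B_I$ (the same disjointness computation you do on $\tilde V_{\mathrm{reg}}$ shows the sheets through a point of $A_I$ form a single coset of $I$), hence a union of connected components of $B_I$, hence semianalytic, and $\mathrm{Gr}(\tilde f)=\bigcup_{I\subset H}(\mathrm{Gr}(\tilde f)\cap A_I)$ is a finite union. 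Your regular-locus step is the case $I=\{e\}$ of this; carrying the same target-side stratification through all isotropy types replaces your induction entirely.
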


\begin{proof}
We prove the semianalytic case. The subanalytic case is similar.

Assume first that $f$ is semianalytic. We show that the graph ${\rm Gr}(f)$
is a semianalytic subset of $X\times Y$.
Let $(x,y)\in X\times Y$.
If $y\not= f(x)$, then $y$ and $f(x)$ have disjoint neighborhoods. Thus
$(x,y)$ has a neighborhood that does not intersect ${\rm Gr}(f)$ and,
consequently, there is nothing to prove. Therefore, assume
$y=f(x)$.  Then there are orbifold charts $(\tilde{V}, G,\varphi)$
and $(\tilde{U}, H,\psi)$ of $X$ and $Y$, respectively, such that
$x\in V=\varphi(\tilde{V})$ and $y\in U=\psi(\tilde{U})$ and the
restriction $f\vert V$ has a semianalytic  equivariant lift
$\tilde{f}\colon \tilde{V}\to \tilde{U}$.  Then the graph
${\rm Gr}(\tilde{f})$ is a semianalytic subset of $\tilde{V}\times\tilde{U}$.
Since $\tilde{f}$ is semianalytic and each $h\in H$ is a real
analytic diffeomorphism of $\tilde{U}$, it follows that also
the graphs ${\rm Gr}(h\circ \tilde{f})$, $h\in H$, are semianalytic.
Thus
{\begin{equation}
(\varphi\times\psi)^{-1}( {\rm Gr}(f)\cap (V\times U))
=
\bigcup_{h\in H}{\rm Gr}(h\circ \tilde{f})
\end{equation}}
is semianalytic as a finite union of semianalytic sets. It follows that
${\rm Gr}(f)$ is semianalytic in $X\times Y$.

Assume then that the graph ${\rm Gr}(f)$ is semianalytic. We show that
$f$ is semianalytic. Let $x\in X$, and let $(\tilde{V},G,\varphi)$ and $(\tilde{U},H,\psi)$
be orbifold charts of $X$ and $Y$, respectively, where $x\in V=
\varphi(\tilde{V})$, $f(x)\in U=\psi(\tilde{U})$, and such that there is
a continuous equivariant lift $\tilde{f}\colon\tilde{V}\to \tilde{U}$ of
$f\vert V$. Choosing $V$ and $U$ to be sufficiently small, we may
assume  that $(\varphi\times\psi)^{-1}({\rm Gr}(f)\cap (V\times U))$
is a semianalytic subset of $\tilde{V}\times\tilde{U}$. Now,
equation $(1)$ holds and
we will show that ${\rm Gr}(\tilde{f})$ is a semianalytic subset of
$\tilde{V}\times\tilde{U}$. 

For any  subset
$I$ of $H$, let $A_I$ be the set of points $(z,w)\in \tilde{V}\times
\tilde{U}$ such that $(z,w)=(z,hw)$ if and only if $h\in I$.
By Lemma \ref{ennengraafia},  each
$A_I$ is semianalytic in $\tilde{V}\times\tilde{U}$. It follows that
each intersection 
$$
B_I=(\varphi\times\psi)^{-1}( {\rm Gr}(f)\cap (V\times U))\cap A_I
$$
is semianalytic.  Now,
$
{\rm Gr}(\tilde{f})\cap A_I
$
is a union of some connected components of $B_I$ that form a locally finite
family in $\tilde{V}\times\tilde{U}$. Since connected
components of semianalytic sets are semianalytic, it follows that
${\rm Gr}(\tilde{f})\cap A_I$ is semianalytic
for every $I\subset H$. Therefore,
$$
{\rm Gr}(\tilde{f})=\bigcup_{I\subset H} ({\rm Gr}(\tilde{f})\cap A_I)
$$
is semianalytic as a finite union of semianalytic sets. It follows that
$f$ is semianalytic.
\end{proof}

\section{Elementary properties}
\label{semi}

\noindent In this section we present some basic  properties of semianalytic sets
and maps.

\begin{lemma} 
\label{semi1}
Let $X$ be a real analytic orbifold. Let $A$ be a semianalytic (subanalytic)
subset of $X$ and let $W$ be an open subset of $X$. Then $A\cap W$ is a
semianalytic (subanalytic) subset of $W$.
\end{lemma}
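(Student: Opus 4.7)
The plan is to verify Definition \ref{maar1} directly for the pair $(W, A \cap W)$, using the fact that an open subset $W$ of a real analytic orbifold $X$ inherits a natural real analytic orbifold structure, in which a chart of $X$ whose image is contained in $W$ serves automatically as a chart of $W$. So for each $x \in W$, I need to produce an orbifold chart $(\tilde{V}, G, \varphi)$ of $X$ with $x \in V = \varphi(\tilde{V}) \subset W$ such that $\varphi^{-1}(A \cap V)$ is semianalytic (subanalytic) in $\tilde{V}$; since $V \subset W$ we then have $\varphi^{-1}((A \cap W) \cap V) = \varphi^{-1}(A \cap V)$, which is exactly what is required.

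First I would, given $x \in W$, use the hypothesis that $A$ is semianalytic (subanalytic) in $X$ to obtain some orbifold chart $(\tilde{V}_1, G_1, \varphi_1)$ with $x \in V_1$ and $\varphi_1^{-1}(A \cap V_1)$ semianalytic (subanalytic) in $\tilde{V}_1$. In general $V_1$ need not lie inside $W$, so the next step is to shrink. Pick a preimage $\tilde{x} \in \varphi_1^{-1}(x)$ and let $H = (G_1)_{\tilde{x}}$ be its isotropy subgroup. Since $G_1$ is finite, there is a connected $H$-invariant open neighborhood $\tilde{V}$ of $\tilde{x}$ in $\tilde{V}_1$ with $g\tilde{V} \cap \tilde{V} = \emptyset$ for $g \in G_1 \setminus H$; moreover, because the projection $\varphi_1$ is open and $W \cap V_1$ is an open neighborhood of $x$, we can take $\tilde{V}$ small enough to guarantee $V := \varphi_1(\tilde{V}) \subset W \cap V_1$. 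Then $(\tilde{V}, H, \varphi_1|_{\tilde{V}})$ is an orbifold chart, and the pair consisting of the inclusion $\lambda \colon \tilde{V} \hookrightarrow \tilde{V}_1$ together with the inclusion $h \colon H \hookrightarrow G_1$ is an embedding of orbifold charts.

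Now I would apply Lemma \ref{apu} to the embedding $(\lambda, h) \colon (\tilde{V}, H, \varphi_1|_{\tilde{V}}) \to (\tilde{V}_1, G_1, \varphi_1)$. It yields that $(\varphi_1|_{\tilde V})^{-1}(A \cap V) = \lambda^{-1}(\varphi_1^{-1}(A \cap V_1))$ is semianalytic (subanalytic) in $\tilde{V}$. Since $V \subset W$, we have $(A \cap W) \cap V = A \cap V$, and since $(\tilde{V}, H, \varphi_1|_{\tilde V})$ is also a chart of the orbifold $W$, Definition \ref{maar1} is verified at $x$. As $x \in W$ was arbitrary, this shows $A \cap W$ is semianalytic (subanalytic) in $W$.

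The only genuine obstacle is the shrinking step, and it is mild: one has to confirm that inside an orbifold chart a point admits arbitrarily small sub-charts that embed into the ambient chart. This is exactly the content of the standard construction based on the finiteness of $G_1$ and the openness of $\varphi_1$, and once this is granted the conclusion is immediate from Lemma \ref{apu}.
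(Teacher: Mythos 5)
Your proof is correct and follows essentially the same route as the paper, which simply observes that the claim follows from Lemma \ref{apu}; you have just made explicit the standard shrinking step (producing a sub-chart $(\tilde{V}, H, \varphi_1|_{\tilde V})$ with image in $W$ that embeds into the original chart) that the paper leaves implicit. The only detail you gloss over is that the inclusion $H\hookrightarrow G_1$ satisfies the kernel condition in the definition of an embedding, which holds because an element fixing the open set $\tilde{V}$ pointwise fixes all of the connected set $\tilde{V}_1$ by analyticity.
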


\begin{proof}
The claim follows easily from Lemma \ref{apu}.
\end{proof}

\begin{lemma}
\label{neighb}
Let $X$ be a real analytic orbifold. Then a subset $A$ of $X$ is 
semianalytic (subanalytic)
if and only if every point $x\in X$ has an open neighborhood $U$ such that
$A\cap U$ is semianalytic (subanalytic) in $U$.
\end{lemma}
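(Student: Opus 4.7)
The plan is to prove the two directions separately, both of which reduce quickly to Definition \ref{maar1} and to the already-established Lemma \ref{semi1}.

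For the forward direction, assume $A$ is semianalytic (subanalytic) in $X$. Then for any point $x\in X$, the whole space $X$ itself is an open neighborhood of $x$, and one may instead pick any open neighborhood $U$ of $x$; by Lemma \ref{semi1}, $A\cap U$ is semianalytic (subanalytic) in $U$. So this direction is essentially immediate.

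For the backward direction, assume every $x\in X$ has an open neighborhood $U_x$ such that $A\cap U_x$ is semianalytic (subanalytic) in $U_x$. To verify that $A$ satisfies Definition \ref{maar1}, fix $x\in X$ and choose such a $U=U_x$. Since $U$ is open in $X$, it inherits an orbifold structure from $X$, and the hypothesis applied inside the orbifold $U$ (using Definition \ref{maar1}) yields an orbifold chart $(\tilde V,G,\varphi)$ of $U$ with $x\in V=\varphi(\tilde V)$ and $\varphi^{-1}((A\cap U)\cap V)$ semianalytic (subanalytic) in $\tilde V$. Because $V\subset U$, we have $(A\cap U)\cap V=A\cap V$, so $\varphi^{-1}(A\cap V)$ is semianalytic (subanalytic) in $\tilde V$.

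The only point requiring brief justification is that a chart $(\tilde V,G,\varphi)$ of $U$ with $V\subset U$ is also a chart for $X$ in the maximal atlas of $X$; this is clear because $V$ is open in $X$ and the chart satisfies the compatibility conditions with any chart of $X$ via restriction to $U$. With that, Definition \ref{maar1} is verified at $x$, and since $x$ was arbitrary, $A$ is semianalytic (subanalytic) in $X$. No step here is an obstacle; the lemma is essentially a local-to-global statement that follows formally from the local nature of the definition.
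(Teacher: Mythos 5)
Your proposal is correct and follows essentially the same route as the paper: the forward direction is immediate (take $U=X$, or restrict via Lemma \ref{semi1}), and the backward direction applies Definition \ref{maar1} inside the open suborbifold $U_x$ to get a chart $(\tilde V,G,\varphi)$ with $x\in V\subset U_x$ and $\varphi^{-1}(A\cap V)$ semianalytic (subanalytic), which is also a chart of $X$. Your brief justification that such a chart lies in the maximal atlas of $X$ is a point the paper leaves implicit, but otherwise the arguments coincide.
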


\begin{proof}
If $A$ is a semianalytic (subanalytic)  subset of $X$, we can choose
$U$ to equal $X$ for all $x\in X$.
Assume then that $A$ is a subset of $X$ having the property that every
$x\in X$ has an open neighborhood $U_x$  for which $A\cap U_x$ is
semianalytic (subanalytic) in $U_x$. Let $x\in X$. 
Then there exists an orbifod chart $(\tilde{V}, G,\varphi)$ with 
$x\in\varphi(\tilde{V})=V\subset U_x$ such that $\varphi^{-1}(A\cap V)$ is
semianalytic (subanalytic) in $\tilde{V}$. Since this holds for any
$x\in X$, it follows that $A$ is seminalytic (subanalytic) in $X$.
\end{proof}

Compare the following proposition to Definition \ref{maar1}.
 
 \begin{prop}
 \label{huhhuh}
 Let $X$ be a real analytic orbifold and let $A\subset X$.  
 Then $A$ is  a semianalytic (subanalytic) subset of $X$
 if and only if $\varphi^{-1}(A\cap V)$ is a semianalytic
 (subanalytic) subset of $\tilde{V}$ for any orbifold chart
 $(\tilde{V}, G, \varphi)$  of $X$,
 where $V=\varphi(\tilde{V})$.
 \end{prop}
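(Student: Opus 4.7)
The backward direction ($\Leftarrow$) is immediate: if $\varphi^{-1}(A\cap V)$ is semianalytic (subanalytic) in $\tilde{V}$ for \emph{every} orbifold chart, then in particular Definition \ref{maar1} is satisfied by choosing, at any $x\in X$, any orbifold chart whose image contains $x$.

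For the forward direction ($\Rightarrow$), the strategy is to verify semianalyticity (subanalyticity) of $\varphi^{-1}(A\cap V)$ locally around each point $\tilde{x}\in\tilde{V}$, then invoke the manifold-level local characterization (i.e., Lemma \ref{neighb} applied to the real analytic manifold $\tilde{V}$). Fix $\tilde{x}\in\tilde{V}$ and set $x=\varphi(\tilde{x})$. Since $A$ is semianalytic (subanalytic) in $X$, Definition \ref{maar1} provides an orbifold chart $(\tilde{V}',G',\varphi')$ with $x\in V'=\varphi'(\tilde{V}')$ such that $(\varphi')^{-1}(A\cap V')$ is semianalytic (subanalytic) in $\tilde{V}'$. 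By the compatibility axiom 3(b) of Definition \ref{ensin}, there exist a chart $(\tilde{V}'',G'',\varphi'')$ with $x\in V''=\varphi''(\tilde{V}'')\subset V\cap V'$ and embeddings $(\lambda,h)\colon(\tilde{V}'',G'',\varphi'')\to(\tilde{V},G,\varphi)$ and $(\lambda',h')\colon(\tilde{V}'',G'',\varphi'')\to(\tilde{V}',G',\varphi')$. Applying Lemma \ref{apu} to the embedding into $(\tilde{V}',G',\varphi')$, the pullback $(\varphi'')^{-1}(A\cap V'')=(\lambda')^{-1}((\varphi')^{-1}(A\cap V'))$ is semianalytic (subanalytic) in $\tilde{V}''$.

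Next I transport this back to $\tilde{V}$. Because $\varphi\circ\lambda=\varphi''$ and $V''\subset V$, the relation $\lambda((\varphi'')^{-1}(A\cap V''))=\lambda(\tilde{V}'')\cap\varphi^{-1}(A\cap V)$ holds. Since $\lambda$ is a real analytic open embedding, its image $\lambda(\tilde{V}'')$ is an open subset of $\tilde{V}$ and $\lambda$ is a real analytic diffeomorphism onto that image; therefore $\lambda(\tilde{V}'')\cap\varphi^{-1}(A\cap V)$ is semianalytic (subanalytic) in the open set $\lambda(\tilde{V}'')\subset\tilde{V}$.

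The one subtle point, which I expect to be the main obstacle, is that $\tilde{x}$ itself need not lie in $\lambda(\tilde{V}'')$: we only know that $\varphi(\lambda(\tilde{V}''))=V''\ni x=\varphi(\tilde{x})$. So there is some $\tilde{y}\in\lambda(\tilde{V}'')$ with $\varphi(\tilde{y})=\varphi(\tilde{x})$, which forces $\tilde{y}=g\tilde{x}$ for some $g\in G$ because fibres of $\varphi$ are $G$-orbits. To remedy this, translate by $g^{-1}$: the set $W:=g^{-1}\lambda(\tilde{V}'')$ is an open neighborhood of $\tilde{x}$ in $\tilde{V}$, the action of $g^{-1}$ is a real analytic diffeomorphism of $\tilde{V}$, and since $\varphi$ is $G$-invariant we have $W\cap\varphi^{-1}(A\cap V)=g^{-1}\bigl(\lambda(\tilde{V}'')\cap\varphi^{-1}(A\cap V)\bigr)$, which is therefore semianalytic (subanalytic) in $W$. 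Thus every $\tilde{x}\in\tilde{V}$ has an open neighborhood in which $\varphi^{-1}(A\cap V)$ is semianalytic (subanalytic), and Lemma \ref{neighb} (or its manifold analogue) concludes that $\varphi^{-1}(A\cap V)$ is semianalytic (subanalytic) in $\tilde{V}$, as required.
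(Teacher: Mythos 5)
Your proof is correct and follows essentially the same route as the paper: both reduce to checking semianalyticity (subanalyticity) locally around each point of $\tilde{V}$, pull back the chart provided by Definition \ref{maar1} via Lemma \ref{apu} and an embedding into $(\tilde{V},G,\varphi)$, and then use a group element $g\in G$ to move the resulting open set so that it becomes a neighborhood of the given point. Your treatment is only slightly more explicit than the paper's (invoking atlas axiom 3(b) for the intermediate chart and spelling out the $g$-translation), but the argument is the same.
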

 
 \begin{proof}
 We prove the subanalytic case. The semianalytic case is similar,
 just the word "subanalytic" should everywhere be replaced by the word
 "semianalytic". If $\varphi^{-1}(A\cap V)$ is a subanalytic subset of
 $\tilde{V}$ for every orbifold chart $(\tilde{V}, G,\varphi)$ of $X$, then it
 follows from Definition \ref{maar1} that $A$ is subanalytic.
 
 Assume then that $A$ is subanalytic. Let $(\tilde{V}, G,\varphi)$
 be an orbifold chart of $X$, and let $V=\varphi(\tilde{V})$.
 We have to show that $\varphi^{-1}(A\cap V)$ is a subanalytic subset of
 $\tilde{V}$. Let $\tilde{y}\in \tilde{V}$. Then $y=\varphi(\tilde{y})\in
 V$. Since $A$ is a subanalytic subset of $X$, there is an orbifold
 chart $(\tilde{W},H,\psi)$ such that $y\in \psi(\tilde{W})=W$ and
 $\psi^{-1}(A\cap W)$ is a subanalytic subset of $\tilde{W}$. 
 Choosing $W$ to be sufficiently small and using  Lemma
 \ref{apu}, we may assume that there is an embedding
 $(\lambda, h)\colon (\tilde{W},H,\psi)\to (\tilde{V}, G, \varphi)$.
 Since $\psi^{-1}(A\cap W)$ is subanalytic in $\tilde{W}$, it
 follows that $\varphi^{-1}(A\cap V)\cap \lambda(\tilde{W})=
 \lambda(\psi^{-1}(A\cap W))$ is subanalytic in $\lambda(\tilde{W})$.
 Thus, for some $g\in G$,  $g\lambda(\tilde{W})$ is a neighborhood
  of $\tilde{y}$ such that 
 $\varphi^{-1}(A\cap V)\cap g\lambda(\tilde{W})$ is subanalytic
 in $g\lambda(\tilde{W})$. Since $\tilde{y}$ was an arbitrary point of
 $\tilde{V}$, it follows  that $\varphi^{-1}(A\cap V)$ 
 is subanalytic in $\tilde{V}$.
 \end{proof}

\begin{lemma}
\label{locfin}
Let $X$ be a real analytic orbifold and let $\{ A_i\}_{i\in I}$ be a locally
finite family of semianalytic (subanalytic) subsets of $X$. Then $\bigcup_{i\in I}A_i$
is a semianalytic (subanalytic) subset of $X$.
\end{lemma}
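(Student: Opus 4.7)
The plan is to reduce the problem to a finite union using the local finiteness hypothesis, and then to invoke the local-to-global criterion already established.

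First, I would fix an arbitrary point $x \in X$. By local finiteness of the family $\{A_i\}_{i \in I}$, there exists an open neighborhood $U$ of $x$ such that the set $I_U = \{i \in I \mid A_i \cap U \neq \emptyset\}$ is finite. Next, by Lemma \ref{semi1}, each $A_i \cap U$ is a semianalytic (resp.\ subanalytic) subset of $U$. Since $U$ is itself an open subset of $X$, it inherits the structure of a real analytic orbifold, so Theorem \ref{basicprop}(1) applies: the finite union
\[
\Bigl(\bigcup_{i \in I} A_i\Bigr) \cap U \;=\; \bigcup_{i \in I_U} (A_i \cap U)
\]
is semianalytic (resp.\ subanalytic) in $U$.

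Finally, since $x \in X$ was arbitrary and every point admits such a neighborhood, Lemma \ref{neighb} concludes that $\bigcup_{i \in I} A_i$ is semianalytic (resp.\ subanalytic) in $X$.

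There is essentially no obstacle here: the lemma is a direct packaging of local finiteness plus the finite-union property (Theorem \ref{basicprop}(1)) and the local characterization (Lemma \ref{neighb}). The only point requiring mild care is to ensure that the neighborhood $U$ chosen for local finiteness is the same one used when invoking the local characterization; since $U$ is open in $X$ and we may restrict further if needed without losing either property, this is automatic.
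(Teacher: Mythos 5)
Your proof is correct and follows essentially the same route as the paper: use local finiteness to find a neighborhood $U$ of each point meeting only finitely many $A_i$, invoke the finite-union property from Theorem \ref{basicprop}(1) together with Lemma \ref{semi1}, and conclude with the local characterization in Lemma \ref{neighb}. The only (immaterial) difference is that you restrict to $U$ before taking the finite union, whereas the paper forms the finite union in $X$ first and then restricts.
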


\begin{proof}
Let $x\in X$. Then $x$ has an open neighborhood $U$ such that
$U\cap A_i=\emptyset$, except for finitely many indices $i\in I$, say
$i=1,\ldots, n$. The finite union $\bigcup_{i=1}^n A_i$ is 
semianalytic (subanalytic) in $X$. 
By Lemma \ref{semi1}, $(\bigcup_{i\in I}A_i)\cap U=
(\bigcup_{i=1}^nA_i)\cap U$
is semianalytic (subanalytic) in $U$. Since $x$ was chosen arbitrarily, it follows from
Lemma \ref{neighb} that $\bigcup_{i\in I}A_i$ is semianalytic (subanalytic) in $X$.
\end{proof}

\begin{lemma}
\label{semi2}
Let $X$ and $Y$ be real analytic orbifolds, $A$ a semianalytic 
(subanalytic) subset of $X$
and $B$ a semianalytic (subanalytic) subset of $Y$. Then $A\times B$ is a 
semianalytic (subanalytic) subset 
of $X\times Y$.
\end{lemma}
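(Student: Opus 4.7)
The plan is to reduce the statement to the manifold case via the product orbifold atlas described in Section \ref{real}, and then handle the manifold case by elementary manipulations.

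First, let $(x,y)\in X\times Y$. By Definition \ref{maar1}, choose an orbifold chart $(\tilde V, G,\varphi)$ of $X$ with $x\in V=\varphi(\tilde V)$ and $\varphi^{-1}(A\cap V)$ semianalytic (subanalytic) in $\tilde V$, and similarly an orbifold chart $(\tilde U,H,\psi)$ of $Y$ with $y\in U=\psi(\tilde U)$ and $\psi^{-1}(B\cap U)$ semianalytic (subanalytic) in $\tilde U$. Then $(\tilde V\times\tilde U,\,G\times H,\,\varphi\times\psi)$ is an orbifold chart of $X\times Y$ containing $(x,y)$, and
$$
(\varphi\times\psi)^{-1}\bigl((A\times B)\cap (V\times U)\bigr)=\varphi^{-1}(A\cap V)\times \psi^{-1}(B\cap U).
$$
So the problem reduces to showing: if $A'\subset\tilde V$ and $B'\subset\tilde U$ are semianalytic (subanalytic) in the manifold sense, then $A'\times B'$ is semianalytic (subanalytic) in $\tilde V\times\tilde U$.

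For the semianalytic case in the manifold setting, the argument is local around any point $(\tilde x,\tilde y)\in\tilde V\times\tilde U$: shrink to neighborhoods $V_0\subset\tilde V$ and $U_0\subset\tilde U$ on which $A'\cap V_0\in S(C^\omega(V_0))$ and $B'\cap U_0\in S(C^\omega(U_0))$. Any $f\in C^\omega(V_0)$ pulls back to an analytic function on $V_0\times U_0$ via the first projection (and similarly for $g\in C^\omega(U_0)$), so both $(A'\cap V_0)\times U_0$ and $V_0\times(B'\cap U_0)$ lie in $S(C^\omega(V_0\times U_0))$; their intersection $(A'\times B')\cap (V_0\times U_0)$ is therefore semianalytic. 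By Lemma \ref{neighb} (applied in the manifold setting, or equivalently via Theorem \ref{samat}), $A'\times B'$ is semianalytic in $\tilde V\times\tilde U$.

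For the subanalytic case, work locally as well: near $(\tilde x,\tilde y)$, write $A'\cap V_0=p_1(C_1)$ and $B'\cap U_0=p_2(C_2)$, where $C_1\subset V_0\times N_1$ and $C_2\subset U_0\times N_2$ are relatively compact semianalytic sets and $p_i$ are the projections. Then $C_1\times C_2$ is semianalytic by the semianalytic case already proved, relatively compact, and its projection to $V_0\times U_0$ equals $(A'\times B')\cap (V_0\times U_0)$. Hence $A'\times B'$ is subanalytic, again by the local characterization.

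The only genuinely delicate point is the passage from the manifold case to the orbifold case, and this is handled cleanly by the product orbifold atlas together with the fact that $\varphi\times\psi$ is a product of quotient maps, so that $(\varphi\times\psi)^{-1}((A\times B)\cap(V\times U))$ factors as a product. Everything else is a routine verification of local definitions, and no new analytic input beyond the well-known manifold theory is required.
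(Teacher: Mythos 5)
Your proof is correct and takes essentially the same route as the paper: both reduce to the manifold statement via the product chart $(\tilde{V}\times\tilde{U},\,G\times H,\,\varphi\times\psi)$ and the identity $(\varphi\times\psi)^{-1}\bigl((A\times B)\cap(V\times U)\bigr)=\varphi^{-1}(A\cap V)\times\psi^{-1}(B\cap U)$. The only difference is that the paper simply cites the manifold-level product lemma (Lemmas 4.5 and 4.17 in \cite{Ka}), whereas you prove it directly by pulling back defining functions and taking products of relatively compact semianalytic sets (with an implicit, harmless permutation of factors), which is a valid verification.
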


\begin{proof}
Let $(x,y)\in X\times Y$. Then there are orbifold charts
$(\tilde{V}, G,\varphi)$ and $(\tilde{U},H,\psi)$ of $X$ and $Y$,
such that $x\in \varphi(\tilde{V})=V$ and $y\in\psi(
\tilde{U})=U$ and $\varphi^{-1}(A\cap V)$ and $\psi^{-1}(B\cap U)$ are semianalytic
(subanalytic)
in $\tilde{V}$ and $\tilde{U}$, respectively. Since the product of semianalytic
(subanalytic) sets is semianalytic (subanalytic) in the manifold case
(Lemmas 4.5 and 4.17 in \cite{Ka}),
the  set $(\varphi\times\psi)^{-1}((A\times B)\cap (V\times U))=
\varphi^{-1}(A\cap V)\times\psi^{-1}(B\cap U)$ is semianalytic (subanalytic) in
$\tilde{V}\times\tilde{U}$.
\end{proof}

\begin{prop}
\label{subprod1}
Let $X_1$, $X_2$, $Y_1$ and $Y_2$ be real analytic orbifolds and
let $f_1\colon X_1\to Y_1$ and $f_2\colon X_2\to Y_2$ be 
semianalytic (subanalytic)
maps. Then the map
$$f_1\times f_2\colon X_1\times X_2\to Y_1\times Y_2,\,\,\,
(x_1,x_2)\mapsto (f_1(x_1),f_2(x_2)),
$$ 
is semianalytic (subanalytic).
\end{prop}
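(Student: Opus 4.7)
The plan is to apply Theorem \ref{graafi} twice: use it to turn the statement into one about graphs, and then use it again in reverse to conclude. Concretely, it suffices to establish that $f_1\times f_2$ is a continuous orbifold map whose graph is a semianalytic (subanalytic) subset of $(X_1\times X_2)\times(Y_1\times Y_2)$.

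The orbifold map property I would verify pointwise. At $(x_1,x_2)\in X_1\times X_2$, choose charts in which $f_1$ and $f_2$ admit equivariant semianalytic (subanalytic) lifts $\tilde f_i\colon \tilde V_i\to \tilde U_i$ with associated homomorphisms $\theta_i\colon G_i\to H_i$. The product charts $(\tilde V_1\times \tilde V_2, G_1\times G_2,\varphi_1\times\varphi_2)$ and $(\tilde U_1\times \tilde U_2,H_1\times H_2,\psi_1\times\psi_2)$ are bona fide product orbifold charts, and $\tilde f_1\times\tilde f_2$ is a $(\theta_1\times\theta_2)$-equivariant continuous local lift of $f_1\times f_2$ in them. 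Hence $f_1\times f_2$ is a continuous orbifold map.

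For the graph I would use the obvious coordinate-swap homeomorphism
\[
\sigma\colon (X_1\times X_2)\times(Y_1\times Y_2)\longrightarrow (X_1\times Y_1)\times(X_2\times Y_2),\quad ((x_1,x_2),(y_1,y_2))\mapsto((x_1,y_1),(x_2,y_2)),
\]
under which $\mathrm{Gr}(f_1\times f_2)$ corresponds bijectively to $\mathrm{Gr}(f_1)\times\mathrm{Gr}(f_2)$. By Theorem \ref{graafi}, each $\mathrm{Gr}(f_i)$ is semianalytic (subanalytic) in $X_i\times Y_i$, and by Lemma \ref{semi2} their product is semianalytic (subanalytic) in the codomain of $\sigma$. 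To transfer this across $\sigma$, I would fix any product orbifold chart of $(X_1\times X_2)\times(Y_1\times Y_2)$ and note that $\sigma$ is realized on chart data by a permutation of Euclidean factors, i.e.\ a real analytic diffeomorphism of ambient open sets; such a diffeomorphism preserves semianalyticity (subanalyticity) in the manifold sense by the classical theory recalled in Section \ref{mani}. Proposition \ref{huhhuh} then licenses the conclusion that $\mathrm{Gr}(f_1\times f_2)$ is semianalytic (subanalytic) on any chart, hence globally. A final application of Theorem \ref{graafi} gives that $f_1\times f_2$ is semianalytic (subanalytic).

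The only nontrivial step is the last chart-level compatibility check: confirming that the coordinate-swap $\sigma$ matches up product charts on the two sides and that the induced map at the level of Euclidean slices is a real analytic diffeomorphism respecting the group actions. This is essentially bookkeeping once the product orbifold structure of Section \ref{real} is unpacked, and no genuinely new idea beyond Theorem \ref{graafi}, Lemma \ref{semi2}, and Proposition \ref{huhhuh} is required.
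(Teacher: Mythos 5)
Your proof is correct and follows essentially the same route as the paper's: identify $\mathrm{Gr}(f_1\times f_2)$ with $\mathrm{Gr}(f_1)\times\mathrm{Gr}(f_2)$ via the coordinate permutation, invoke Theorem \ref{graafi} and Lemma \ref{semi2}, and transfer back through Theorem \ref{graafi}. The only difference is that you explicitly verify the continuous-orbifold-map hypothesis and the chart-level behaviour of the swap, details the paper leaves implicit by simply calling the permutation a real analytic orbifold diffeomorphism.
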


\begin{proof} 
%
By Theorem \ref{graafi} and Lemma \ref{semi2}, ${\rm Gr}(f_1)\times
{\rm Gr}(f_2)$ is a semianalytic (subanalytic) 
subset of $X_1\times Y_1\times X_2\times Y_2$.
Let 
$$
f\colon X_1\times Y_1\times X_2\times Y_2\to X_1\times X_2\times
Y_1\times Y_2
$$ 
be the map
changing the order of  coordinates. Since $f$ is a real 
analytic orbifold diffeomorphism, it follows that ${\rm Gr}(f_1\times f_2)=
f({\rm Gr}(f_1)\times{\rm Gr}(f_2))$ is semianalytic
(subanalytic) in $X_1\times X_2\times
Y_1\times Y_2$. The claim now follows from Theorem \ref{graafi}.
\end{proof}
 
 \begin{prop}
 \label{subprod2}
 Let $X$, $Y_1$ and $Y_2$ be real analytic orbifolds and let
 $f_1\colon X\to Y_1$ and $f_2\colon X\to Y_2$ be semianalytic
 (subanalytic) maps. Then the map 
 $$
 (f_1, f_2)\colon X\to Y_1\times Y_2,\,\,\, x\mapsto(f_1(x),f_2(x)),
 $$ 
 is semianalytic (subanalytic).
 \end{prop}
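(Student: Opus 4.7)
The plan is to apply Theorem \ref{graafi}: I will verify that $(f_1,f_2)$ is a continuous orbifold map and that its graph is subanalytic (resp.\ semianalytic) in $X\times Y_1\times Y_2$, then conclude. Continuity of $(f_1,f_2)$ is automatic since $f_1$ and $f_2$ are continuous (see the remark after Definition \ref{maar2}). For the orbifold-map property, given $x\in X$, I would start from charts at $x$ on which $f_1$ and $f_2$ each admit an equivariant subanalytic lift and use the refinement axiom (Definition \ref{ensin}(3b)) together with Lemma \ref{apu} to pull both lifts back to a single common chart $(\tilde V,G,\varphi)$ at $x$. This yields continuous equivariant lifts $\tilde f_i\colon\tilde V\to\tilde U_i$ with respect to homomorphisms $\theta_i\colon G\to H_i$, and the pair $(\tilde f_1,\tilde f_2)\colon\tilde V\to\tilde U_1\times\tilde U_2$ is then a continuous lift of $(f_1,f_2)\vert V$ equivariant with respect to $g\mapsto(\theta_1(g),\theta_2(g))$; hence $(f_1,f_2)$ is an orbifold map.

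The main step is the subanalyticity of $\mathrm{Gr}((f_1,f_2))$. Pick $(x,y_1,y_2)\in X\times Y_1\times Y_2$; if $(y_1,y_2)\neq(f_1(x),f_2(x))$, continuity provides a neighborhood disjoint from the graph and there is nothing to check. Otherwise, I would work in the product chart $(\tilde V\times\tilde U_1\times\tilde U_2,\,G\times H_1\times H_2,\,\varphi\times\psi_1\times\psi_2)$ of $X\times Y_1\times Y_2$. The key observation is the identity
$$
(\varphi\times\psi_1\times\psi_2)^{-1}\bigl(\mathrm{Gr}((f_1,f_2))\cap(V\times U_1\times U_2)\bigr)=p_1^{-1}(S_1)\cap p_2^{-1}(S_2),
$$
where $p_1,p_2$ are the real analytic projections of $\tilde V\times\tilde U_1\times\tilde U_2$ to $\tilde V\times\tilde U_1$ and $\tilde V\times\tilde U_2$, and $S_i=(\varphi\times\psi_i)^{-1}(\mathrm{Gr}(f_i)\cap(V\times U_i))$. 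By Theorem \ref{graafi}, $\mathrm{Gr}(f_i)$ is subanalytic in $X\times Y_i$; Proposition \ref{huhhuh} then gives subanalyticity of $S_i$ in $\tilde V\times\tilde U_i$; the manifold-level Theorem \ref{hiro} makes each $p_i^{-1}(S_i)$ subanalytic; and Theorem \ref{basicprop}(2) finishes the chart computation. Assembling the chart conclusions yields subanalyticity of $\mathrm{Gr}((f_1,f_2))$ in $X\times Y_1\times Y_2$, and a final application of Theorem \ref{graafi} gives the proposition. The semianalytic case is word-for-word identical.

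The only real obstacle I expect is organizational rather than substantive: one has to set up the common refinement so that both lifts $\tilde f_1$ and $\tilde f_2$ live on the same chart of $X$, and to verify the chart identity displayed above. Once those are in place, all of the substantive content has already been done at the manifold level (Theorem \ref{hiro}) and at the chart-level criterion (Proposition \ref{huhhuh}), and no new orbifold input beyond what is listed is needed.
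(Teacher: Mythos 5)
Your proof is correct and rests on the same underlying identity as the paper's: the graph of $(f_1,f_2)$ is the intersection of two ``cylinders'' over $\mathrm{Gr}(f_1)$ and $\mathrm{Gr}(f_2)$ inside $X\times Y_1\times Y_2$. The difference is only in execution. The paper works globally: it writes $\mathrm{Gr}((f_1,f_2))=(\mathrm{Gr}(f_1)\times Y_2)\cap f(\mathrm{Gr}(f_2)\times Y_1)$, gets semianalyticity (subanalyticity) of each factor from Lemma \ref{semi2} (products) together with the coordinate-permuting real analytic diffeomorphism, and never descends to charts. You instead descend to a product chart and realize each cylinder as $p_i^{-1}(S_i)$ for the projections $p_i$, invoking Proposition \ref{huhhuh} and the manifold-level Theorem \ref{hiro}. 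Both are fine; the paper's version is shorter because Lemma \ref{semi2} already packages the chart work, while yours has the small virtue of explicitly checking that $(f_1,f_2)$ is a continuous orbifold map before applying Theorem \ref{graafi}, a hypothesis the paper uses silently. One small repair for your route: Theorem \ref{hiro} is stated only for subanalytic sets, so in the semianalytic case the claim that your argument is ``word-for-word identical'' needs a different justification for $p_i^{-1}(S_i)$ --- either cite the semianalytic inverse-image fact used in Lemma \ref{semi3} (Lemma 4.6 in \cite{Ka}), or simply observe that $p_i^{-1}(S_i)$ is, up to a permutation of coordinates, the product $S_i\times\tilde U_{3-i}$ and apply the product lemma.
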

 
 \begin{proof} Since $f_1$ and $f_2$ are semianalytic (subanalytic), 
 it follows from Lemma \ref{semi2},
 that ${\rm Gr}(f_1)\times Y_2$ and ${\rm Gr}(f_2)\times Y_1$ are 
 semianalytic (subanalytic)
 subsets of $X\times Y_1\times Y_2$ and $X\times Y_2\times Y_1$, respectively.
 Let $f\colon X\times Y_2\times Y_1\to X\times Y_1\times Y_2$ be the map
 changing the order of coordinates. Then $f({\rm Gr}(f_2)\times Y_1)$ is a 
 semianalytic (subanalytic)
 subset of $X\times Y_1\times Y_2$. Therefore,
 $$
 {\rm Gr}( (f_1,f_2) )= ({\rm Gr}(f_1)\times Y_2)\cap  f({\rm Gr}(f_2)\times Y_1)
 $$
 is semianalytic (subanalytic) as an intersection of two 
semianalytic  (subanalytic) sets. It follows from
 Theorem \ref{graafi}, that $(f_1, f_2)$ is semianalytic (subanalytic).
 \end{proof}

\section{Images and inverse images}
\label{Sub}

\noindent In this section we study images and inverse images of semi- and
subanalytic sets in the orbifold case. The orbifold case is completely
analogous to the manifold case, compare Theorem \ref{kuvat} to
Corollary \ref{mina}.

\begin{lemma}
\label{semi3}
Let $X$ and $Y$ be real analytic orbifolds and let $f\colon X\to Y$ be a real
analytic map. If $B$ is a semianalytic subset of $Y$, then $f^{-1}(B)$ is a
semianalytic subset of $X$. 
\end{lemma}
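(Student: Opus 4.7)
The plan is to verify the semianalyticity of $f^{-1}(B)$ locally via Definition \ref{maar1}: for each $x\in X$, I will exhibit a single orbifold chart around $x$ in which the preimage pulls back to a semianalytic set.

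First I would fix an arbitrary $x\in X$ and use the fact that $f$ is a real analytic orbifold map to obtain orbifold charts $(\tilde V,G,\varphi)$ and $(\tilde U,H,\psi)$ at $x$ and $f(x)$ respectively, together with a real analytic $\theta$-equivariant lift $\tilde f\colon\tilde V\to\tilde U$ satisfying $\psi\circ\tilde f=f\circ\varphi$ on $\tilde V$ (after shrinking if necessary so that $f(V)\subset U$). Since $B$ is semianalytic in $Y$, Proposition \ref{huhhuh} applied to the chart $(\tilde U,H,\psi)$ guarantees that $\psi^{-1}(B\cap U)$ is a semianalytic subset of the real analytic manifold $\tilde U$.

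The second step is the key manifold-level fact: the preimage of a semianalytic set under a real analytic map of manifolds is semianalytic. This is essentially immediate from the definition, since if a set is locally a Boolean combination of loci $\{g_\alpha>0\}$ with $g_\alpha\in{\rm C}^\omega(W)$, then its preimage under a real analytic $\tilde f$ is, on $\tilde f^{-1}(W)$, the same Boolean combination of the loci $\{g_\alpha\circ\tilde f>0\}$, and $g_\alpha\circ\tilde f\in{\rm C}^\omega(\tilde f^{-1}(W))$. I will either quote this or include the one-line verification. Applying it to $\tilde f$ and $\psi^{-1}(B\cap U)$ gives that $\tilde f^{-1}(\psi^{-1}(B\cap U))$ is semianalytic in $\tilde V$.

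Finally I would use commutativity of the defining diagram: for $z\in\tilde V$,
\[
z\in\tilde f^{-1}(\psi^{-1}(B\cap U))\iff \psi(\tilde f(z))\in B\iff f(\varphi(z))\in B\iff z\in\varphi^{-1}(f^{-1}(B)\cap V),
\]
so $\varphi^{-1}(f^{-1}(B)\cap V)$ is semianalytic in $\tilde V$. Since this chart works for the arbitrary point $x$, Definition \ref{maar1} yields that $f^{-1}(B)$ is semianalytic in $X$. The only nontrivial input is the manifold-level preimage statement, and that is a direct unwinding of the definition of $S({\rm C}^\omega(U))$; the rest is bookkeeping to lift everything through the orbifold charts.
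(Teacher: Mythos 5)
Your argument is correct and follows the paper's own proof essentially verbatim: pass to charts with a real analytic equivariant lift $\tilde f$, apply Proposition \ref{huhhuh} to get that $\psi^{-1}(B\cap U)$ is semianalytic in $\tilde U$, invoke the manifold-level preimage fact (the paper cites Lemma 4.6 of \cite{Ka} where you unwind the definition directly), and use $\varphi^{-1}(f^{-1}(B)\cap V)=\tilde f^{-1}(\psi^{-1}(B\cap U))$. No gaps; the extra verification of the manifold-level step is a harmless expansion of the cited lemma.
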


\begin{proof}
Let $x\in X$. Then there are orbifold charts 
$(\tilde{V}, G,\varphi)$ and $(\tilde{U},H,\psi)$ of $X$ and $Y$,
respectively, such that $x\in V=\varphi(\tilde{V})$, $f(x)\in U=\psi(\tilde{U})$
and $f\vert V$ has a real analytic  equivariant lift $\tilde{f}\colon \tilde{V}\to\tilde{U}$.
By Proposition \ref{huhhuh},
$\psi^{-1}(B\cap U)$ is a semianalytic subset of $\tilde{U}$. Since the inverse
image of a semianalytic set is semianalytic in the manifold case
(Lemma 4.6 in \cite{Ka}), it follows that $\varphi^{-1}(f^{-1}(B)\cap V)= {\tilde{f}}^{-1}(
\psi^{-1}(B\cap U))$ is a semianalytic subset of $\tilde{V}$.
\end{proof}

 \begin{lemma}
 \label{ymp}
 Let $X$ be a real analytic orbifold, let $x\in X$ and let
 $U$ be a neighborhood of $x$. Then $x$ has a relatively
 compact neighborhood $O$ such that $\overline{O}\subset
 U$ and $O$ is a semianalytic subset of $X$.
  \end{lemma}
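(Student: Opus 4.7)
The plan is to construct $O$ as the image of a small open Euclidean ball in a linear orbifold chart centered at $x$.

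By local smoothness of $X$ (Section \ref{real}), I first choose an orbifold chart $(\tilde V, G, \varphi)$ at $x$ with $V = \varphi(\tilde V) \subset U$, in which $\tilde V$ is identified with an open neighborhood of $0 \in {\mathbb R}^n$, the group $G$ acts orthogonally, and $\varphi(0) = x$. Next, pick $r > 0$ so that $\overline{B(0,r)} \subset \tilde V$, set $\tilde O = B(0,r)$, and let $O = \varphi(\tilde O)$. Since the orthogonal action of $G$ preserves norms, $\tilde O$ is $G$-invariant, so $\varphi^{-1}(O \cap V) = \tilde O$ and $O$ is open in $V$ (and hence in $X$) and contains $x$. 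Moreover, $\overline O = \varphi(\overline{B(0,r)})$ is compact and contained in $V \subset U$, so $O$ is relatively compact with $\overline O \subset U$.

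It remains to check that $O$ is a semianalytic subset of $X$. The preimage $\varphi^{-1}(O \cap V) = \tilde O = \{y \in \tilde V : r^2 - \|y\|^2 > 0\}$ is cut out by a single real analytic inequality and is therefore semianalytic in $\tilde V$; this verifies Definition \ref{maar1} at the point $x$. For an arbitrary $y \in X$, I distinguish two cases. If $y \notin \overline O$, any orbifold chart $(\tilde W, H, \psi)$ at $y$ whose basic open set $W$ is chosen disjoint from $\overline O$ satisfies $\psi^{-1}(O \cap W) = \emptyset$, which is trivially semianalytic. If $y \in \overline O$, then $y \in V$, and Condition $3b$ of Definition \ref{ensin} produces an orbifold chart at $y$ admitting an embedding into $(\tilde V, G, \varphi)$; since the argument of Lemma \ref{apu} requires only that $\varphi^{-1}(O \cap V)$ be semianalytic in $\tilde V$ (not a priori knowledge that $O$ is semianalytic in $X$), the preimage of $O$ in this smaller chart is semianalytic.

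The construction is essentially routine once the linear chart is selected; the only point requiring care is the semianalyticity check at points of $\overline O \setminus O$, which is handled through the atlas compatibility axiom together with the pullback argument of Lemma \ref{apu}.
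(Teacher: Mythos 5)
Your proof is correct and follows essentially the same construction as the paper: take the image under $\varphi$ of a small ball in a chart around $x$. The only (harmless) variation is that you use the orthogonal linearization to make the ball $G$-invariant so that $\varphi^{-1}(O)$ is the ball itself, whereas the paper takes an arbitrary ball $\tilde O$ about a lift of $x$ and observes that $\varphi^{-1}(O)=\bigcup_{g\in G} g\tilde O$ is semianalytic as a finite union of semianalytic sets; your extra verification at points $y\neq x$ is also fine (and could be shortened by noting that the single chart $(\tilde V,G,\varphi)$ already witnesses Definition \ref{maar1} at every $y\in V$).
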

 
 \begin{proof}
 Let $(\tilde{V}, G, \varphi)$ be an orbifold chart such that
 $x\in \varphi(\tilde{V})=V\subset U$. Let $\tilde{x}\in \tilde{V}$
 be such that $\varphi(\tilde{x})=x$. Now, let $\tilde{O}\subset\tilde{V}$
 be a ball with $\tilde{x}$ as a center. We assume the radius of
 $\tilde{O}$ to be so small that  ${\overline{\tilde{O}}}$ is compact.
 Then $\tilde{O}$ is relatively compact and semianalytic.
Thus $O=\varphi(\tilde{O})$ is a relatively compact neighborhood
 of $x$ with $\overline{O}\subset V$. Moreover, $O$ is semianalytic,
 since $\varphi^{-1}(O)=\bigcup_{g\in G}(g\tilde{O})$ is semianalytic as a
 finite union of semianalytic sets.
 \end{proof}

\begin{theorem}
\label{kuvat}
Let $X$ and $Y$ be real analytic orbifolds and let $f\colon X\to Y$ be
a subanalytic map. If $B$ is a subanalytic subset of $Y$, then $f^{-1}(B)$
is a subanalytic subset of $X$. If, in addition, $f$ is a proper map, then the
image $f(A)$ of any subanalytic subset $A$ of $X$ is subanalytic in $Y$.
\end{theorem}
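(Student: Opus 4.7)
The plan is to reduce everything to the manifold Corollary \ref{mina} by working in orbifold charts; the image half will use properness of $f$ to replace the global map by the restriction to a finite chart cover of a compact preimage.

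For the inverse image, fix $x\in X$, and let $(\tilde V, G, \varphi)$ and $(\tilde U, H, \psi)$ be orbifold charts around $x$ and $f(x)$ together with a subanalytic equivariant lift $\tilde f\colon\tilde V\to\tilde U$ of $f|_V$, supplied by Definition \ref{maar2}. By Proposition \ref{huhhuh}, $\psi^{-1}(B\cap U)$ is subanalytic in $\tilde U$, so Corollary \ref{mina} applied to the manifold map $\tilde f$ yields
$$
\varphi^{-1}\bigl(f^{-1}(B)\cap V\bigr)=\tilde f^{-1}\bigl(\psi^{-1}(B\cap U)\bigr)
$$
subanalytic in $\tilde V$. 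Since $x$ was arbitrary, $f^{-1}(B)$ is subanalytic in $X$.

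For the image, fix $y\in Y$, a chart $(\tilde U, H, \psi)$ around $y$, and (via Lemma \ref{ymp}) a relatively compact semianalytic neighborhood $W$ of $y$ with $\overline W\subset U$. Properness of $f$ makes $K:=f^{-1}(\overline W)$ compact. For each $x\in K$, combine the subanalytic-map definition with the atlas refinement condition applied at $f(x)$ to produce orbifold charts $(\tilde V_x, G_x, \varphi_x)$ around $x$ and $(\tilde U_x, H_x, \psi_x)$ around $f(x)$, an embedding $(\lambda_x, h_x)\colon(\tilde U_x, H_x, \psi_x)\to(\tilde U, H, \psi)$, and a subanalytic equivariant lift $\tilde f_x$ satisfying $\psi\circ\lambda_x\circ\tilde f_x=f\circ\varphi_x$. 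After shrinking, assume $\overline{V_x}$ is compact in a slightly larger chart, then extract a finite subcover $V_1,\ldots,V_n$ of $K$, along with open sets $N_i\subset V_i$ with $\overline{N_i}$ compact in $V_i$ and $\{N_i\}$ still covering $K$.

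Since $f^{-1}(W)\subset K\subset\bigcup_i N_i$, we get
$$
f(A)\cap W=\bigcup_{i=1}^n\bigl(f(A\cap N_i)\cap W\bigr),
$$
and each summand equals $\psi(\lambda_i(\tilde f_i(\varphi_i^{-1}(A\cap N_i))))\cap W$. The set $\varphi_i^{-1}(A\cap N_i)$ is subanalytic in $\tilde V_i$ by Proposition \ref{huhhuh}, and relatively compact there because the finite quotient $\varphi_i$ is proper, so $\varphi_i^{-1}(\overline{N_i})$ is compact. Since $\tilde f_i$ is a continuous subanalytic map between manifolds, the image of a relatively compact subanalytic set under it is subanalytic (a standard consequence of Corollary \ref{mina}: intersect the subanalytic graph ${\rm Gr}(\tilde f_i)$ with the relatively compact slice, then project). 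Hence $\tilde f_i(\varphi_i^{-1}(A\cap N_i))$ is subanalytic in $\tilde U_i$; applying the real analytic open embedding $\lambda_i$, taking the finite $H$-saturation, and forming the finite union show that $\psi^{-1}(f(A)\cap W)$ is subanalytic in $\tilde U$. Since this holds for every $y\in Y$, Lemma \ref{neighb} promotes the conclusion to $f(A)$ being subanalytic in $Y$. The main technical hurdle is that the local lift $\tilde f_i$ need not be proper even though $f$ is; exploiting that the chart maps $\varphi_i$ are finite (hence proper) quotients is exactly what converts the relevant subanalytic pieces in $\tilde V_i$ into relatively compact subanalytic sets, the situation in which the manifold-level image-is-subanalytic principle applies.
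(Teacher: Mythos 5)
Your argument for the inverse image is the same as the paper's: local equivariant lift, Proposition \ref{huhhuh}, then Corollary \ref{mina}. For the image, your proof is correct but organized differently from the paper's. The paper covers the \emph{source} $X$ by a locally finite family of relatively compact semianalytic open sets $O_i$ subordinate to charts with subanalytic lifts, shows each $f(A\cap\overline{O}_i)$ is subanalytic in $Y$, and uses properness only at the very end, to guarantee that the family $\{f(A\cap\overline{O}_i)\}$ is locally finite in $Y$ so that Lemma \ref{locfin} applies to the union. You instead localize in the \emph{target}: properness enters at the start, to make $K=f^{-1}(\overline{W})$ compact, which yields a finite chart cover and lets you conclude via Lemma \ref{neighb} at each $y\in Y$. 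Both routes rest on the same kernel, namely that the image of a relatively compact subanalytic set under a subanalytic map of manifolds is subanalytic; you make this explicit with the graph-and-projection argument, whereas the paper simply cites Corollary \ref{mina}, so on this point you are actually more careful than the source. Your version pays for the target-side localization with the extra bookkeeping of embeddings $\lambda_i$ into the fixed chart $(\tilde U,H,\psi)$ (the refinement argument you gloss is standard but should be carried out, including the observation that composing a subanalytic lift with a real analytic open embedding preserves subanalyticity); the paper's version avoids this because each $f(A\cap\overline{O}_i)$ is shown to be subanalytic in $Y$ outright. One small patch to yours: to apply Proposition \ref{huhhuh} to $A\cap N_i$ you need $A\cap N_i$ to be a subanalytic subset of $X$, so the shrunken sets $N_i$ should be taken semianalytic (or you should work with $A\cap\overline{N_i}$ with $N_i$ semianalytic, as the paper does with $\overline{O}_i$); Lemma \ref{ymp}, which you already invoke for $W$, supplies exactly such neighborhoods, so this is an omission of a word rather than of an idea.
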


\begin{proof}
Let $B$ be a subanalytic subset of $Y$. Let $x\in X$ and let $(\tilde{V}, G,\varphi)$
and $(\tilde{U}, H,\psi)$ be orbifold charts of $X$ and $Y$, respectively, such that
$x\in V=\varphi(\tilde{V})$, $f(x)\in U=\psi(\tilde{U})$, and $f\vert V$ has a subanalytic
equivariant lift $\tilde{f}\colon \tilde{V}\to \tilde{U}$.  By Proposition \ref{huhhuh},
$\psi^{-1}(B\cap U)$ is a subanalytic subset of $\tilde{U}$. By Corollary
\ref{mina}, the inverse image
$\tilde{f}^{-1}(\psi^{-1}(B\cap U))$ is subanalytic in $\tilde{V}$.  The first claim follows,
since $\varphi^{-1}(f^{-1}(B)\cap V)=\tilde{f}^{-1}(\psi^{-1}(B\cap U))$.

Assume then that $f$ is a proper map and that $A$ is a subanalytic subset of
$X$. Remembering that orbifolds are paracompact and using Lemma \ref{ymp},
it is possible to construct a locally finite open cover $\{{O}_i\}_{i\in I}$ of $X$
such that each $O_i$ is subanalytic  and each
closure $\overline{O}_i$ is compact, $\overline{O}_i\subset
V_i$ and each restriction $f\vert\colon V_i\to U_i$ has a subanalytic equivariant
lift $\tilde{f}_i\colon \tilde{V}_i\to \tilde{U}_i$, where $(\tilde{V}_i, G_i,\varphi_i)$ and
$(\tilde{U}_i, H_i,\psi_i)$ are orbifold charts of $X$ and $Y$, respectively, and
$V_i=\varphi_i(\tilde{V}_i)$ and $U_i=\psi_i(\tilde{U}_i)$. 

Since $A\cap \overline{O}_i$ is subanalytic in $X$, it follows 
from Proposition \ref{huhhuh} that $\varphi_i^{-1}(A\cap \overline{O}_i)$ 
is subanalytic in $\tilde{V}_i$. Since $\varphi_i^{-1}(A\cap \overline{O}_i)$
is relatively compact and since $\tilde{f}_i$ is subanalytic, it follows 
from Corollary \ref{mina} that
$\tilde{f}_i(\varphi_i^{-1}(A\cap \overline{O}_i))$ is subanalytic in $\tilde{U}_i$.
But then also $h(\tilde{f}(\varphi_i^{-1}(A\cap \overline{O}_i)))$ is subanalytic
in $\tilde{U}_i$, for every $h\in H_i$. Therefore,
$$
\psi^{-1}_i( f(A\cap \overline{O}_i))
=\bigcup_{h\in H_i}h(\tilde{f}(\varphi_i^{-1}(A\cap \overline{O}_i)))
$$
is subanalytic in $\tilde{U}_i$ as a finite union of subanalytic sets.
Thus $f(A\cap \overline{O}_i)$ is a subanalytic subset of $Y$, for
every $i\in I$. Since $f$ is a proper map, and the cover $\{ \overline{O}_i\}_{i\in I}$
is locally finite, it follows that the family $\{ f(A\cap \overline{O}_i)\}_{i\in I}$
is also locally finite. It now follows from Lemma \ref{locfin} that
$$
f(A)=\bigcup_{i\in I}f(A\cap\overline{O}_i)
$$
is a subanalytic subset of $Y$.
\end{proof}

\begin{cor}
\label{compo}
Let $X$, $Y$ and $Z$ be real analytic orbifolds and let
$f\colon X\to Y$ and $g\colon Y\to Z$ be subanalytic  maps.
Assume $g$ is proper. Then the composed map $g\circ f\colon
X\to Z$ is subanalytic.
\end{cor}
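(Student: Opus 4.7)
The plan is to apply Theorem \ref{graafi} and express the graph of $g\circ f$ as the image of ${\rm Gr}(f)$ under a conveniently chosen proper subanalytic map. Since $f$ and $g$ are continuous orbifold maps, so is their composition (local lifts of $g\circ f$ are obtained by composing local lifts of $f$ and $g$), hence by Theorem \ref{graafi} it suffices to show that ${\rm Gr}(g\circ f)$ is a subanalytic subset of $X\times Z$.

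To this end I would consider the auxiliary map
\[
\Phi\colon X\times Y\to X\times Z,\qquad (x,y)\mapsto (x,g(y)).
\]
Since $1_X$ is real analytic (and in particular subanalytic) and $g$ is subanalytic, Proposition \ref{subprod1} shows that $\Phi = 1_X \times g$ is a subanalytic orbifold map. Theorem \ref{graafi} applied to $f$ gives that ${\rm Gr}(f)$ is a subanalytic subset of $X\times Y$, and a direct verification yields
\[
\Phi({\rm Gr}(f)) = \{(x,g(f(x)))\colon x\in X\} = {\rm Gr}(g\circ f).
\]

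The essential ingredient is that $\Phi$ is proper. For any compact $K\subset X\times Z$, letting $p_X$ and $p_Z$ denote the two coordinate projections of $K$, one has
\[
\Phi^{-1}(K) \subset p_X(K) \times g^{-1}(p_Z(K)),
\]
and the right-hand side is compact since $g$ is proper; hence $\Phi^{-1}(K)$, being closed in a compact set, is itself compact. With properness of $\Phi$ established, Theorem \ref{kuvat} implies that ${\rm Gr}(g\circ f) = \Phi({\rm Gr}(f))$ is subanalytic in $X\times Z$, and a final application of Theorem \ref{graafi} shows that $g\circ f$ is subanalytic.

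The only mildly subtle step is the properness check for $\Phi$, which is immediate from the definition of properness and the hypothesis on $g$; everything else reduces to a routine combination of Theorem \ref{graafi}, Proposition \ref{subprod1}, and Theorem \ref{kuvat}.
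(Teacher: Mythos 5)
Your proof is correct and follows essentially the same route as the paper: both write ${\rm Gr}(g\circ f)$ as the image of ${\rm Gr}(f)$ under ${\rm id}_X\times g$ and combine Proposition \ref{subprod1}, Theorem \ref{kuvat}, and Theorem \ref{graafi}. The only difference is that you spell out the properness of ${\rm id}_X\times g$, which the paper simply asserts; your verification is correct and a welcome addition.
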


\begin{proof}
Let ${\rm id}$ be the identity map of $X$.  By Proposition 
\ref{subprod1}, the map 
$$
{\rm id}\times g\colon X\times Y\to X\times Z
$$
is subanalytic. Since ${\rm id}\times g$ is a proper map and the set
${\rm Gr}(f)$ is subanalytic  in $X\times Y$ by Theorem \ref{graafi}, it follows
from Theorem \ref{kuvat} that ${\rm Gr}(g\circ f)=({\rm id}\times g){\rm
Gr}(f)$ is subanalytic in $X\times Z$. Since $g\circ f$ is a continuous  
orbifold map, it follows from Theorem \ref{graafi}, that $g\circ f$ is
subanalytic.
\end{proof}

\section{An alternative definition of a subanalytic set}
\label{altern}

\noindent In this section we show that subanalytic subsets of real analytic
orbifolds could in fact be defined in the same way as the 
subanalytic subsets of real analytic manifolds.

Let $X$ be a real analytic orbifold and let $A\subset X$. 
We say that $A$ is a {\it projection of
a semianalytic set} if there exists a real analytic orbifold $Y$ and a semianalytic
subset $B$ of $X\times Y$ such that $A=p(B)$, where $p\colon X\times Y\to X$
is the projection. 

\begin{theorem}
\label{orbiss}
Let $X$ be a real analytic orbifold and let $A\subset X$.
Then $A$ is a subanalytic subset of $X$ if and only if every
point $x$ of $X$ has a neighborhood $U$ such that $A\cap U$ is a 
projection of a relatively compact semianalytic set. 
\end{theorem}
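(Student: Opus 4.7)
The plan is to prove the two implications separately, in each direction reducing to the manifold Definition \ref{sub} by working inside orbifold charts.

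For the ``only if'' direction, assume $A$ is subanalytic and fix $x\in X$ with a preimage $\tilde{x}$ in some orbifold chart $(\tilde{W},G,\varphi)$ at $x$. By Proposition \ref{huhhuh}, $\varphi^{-1}(A\cap W)$ is subanalytic in $\tilde{W}$ in the manifold sense, so Definition \ref{sub} yields a neighborhood $\tilde{O}$ of $\tilde{x}$, a real analytic manifold $N$, and a relatively compact semianalytic $\tilde{B}\subset \tilde{O}\times N$ whose first-factor projection equals $\varphi^{-1}(A\cap W)\cap \tilde{O}$. Since $G$ is finite, I can shrink $\tilde{O}$ to a $G_{\tilde{x}}$-invariant connected open neighborhood of $\tilde{x}$ with $g\tilde{O}\cap \tilde{O}=\emptyset$ for every $g\in G\setminus G_{\tilde{x}}$; then $(\tilde{O}, G_{\tilde{x}}, \varphi|_{\tilde{O}})$ is an orbifold chart for $U:=\varphi(\tilde{O})$, which is an open neighborhood of $x$. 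Taking $Y:=N$ and $B:=(\varphi|_{\tilde{O}}\times \mathrm{id}_N)(\tilde{B}) \subset X\times Y$, one verifies that $p(B)=A\cap U$; that $B$ is semianalytic, because its preimage in the chart $(\tilde{O}\times N, G_{\tilde{x}}, \varphi|_{\tilde{O}}\times \mathrm{id}_N)$ of $X\times N$ is the semianalytic finite union $\bigcup_{g\in G_{\tilde{x}}} g\cdot\tilde{B}$; and that $B$ is relatively compact, since it is contained in the compact image of $\overline{\tilde{B}}$.

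For the ``if'' direction, fix $x\in X$ and let $A\cap U=p(B)$ with $B\subset X\times Y$ semianalytic and $\overline{B}$ compact. I would choose \emph{nested} charts at $x$: an ambient chart $(\tilde{W}^*, G, \varphi^*)$ with $W^*:=\varphi^*(\tilde{W}^*)\subset U$, and inside it a $G$-invariant connected open $\tilde{W}$ whose closure is compact in $\tilde{W}^*$, giving a subchart $(\tilde{W},G,\varphi)$ at $x$ for which $\overline{W}$ is compact in $W^*$. Analogously, cover the compact set $\pi_Y(\overline{B})$ by finitely many nested pairs $\tilde{V}_j\subset \tilde{V}_j^*$ in $Y$ with charts $(\tilde{V}_j, H_j, \psi_j)$ and $(\tilde{V}_j^*, H_j, \psi_j^*)$. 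Setting $B_j:=B\cap (W\times V_j)$ and $\tilde{B}_j:=(\varphi^*\times \psi_j^*)^{-1}(B_j)\subset \tilde{W}^*\times \tilde{V}_j^*$, the inclusion $\overline{B_j}\subset \overline{B}\cap (\overline{W}\times \overline{V_j})\subset W^*\times V_j^*$ combined with properness of the finite quotient map $\varphi^*\times \psi_j^*$ shows that $\tilde{B}_j$ is relatively compact in $\tilde{W}^*\times \tilde{V}_j^*$, and it is semianalytic by Proposition \ref{huhhuh}. A direct unwinding, together with the inclusion $\pi_Y(B)\subset \bigcup_j V_j$, gives $\bigcup_j \tilde{p}_j(\tilde{B}_j)=\varphi^{-1}(A\cap W)$, where $\tilde{p}_j$ denotes the first-factor projection. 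Each summand is subanalytic in $\tilde{W}^*$ by Definition \ref{sub}, so the finite union is subanalytic in $\tilde{W}^*$ and hence in the open subset $\tilde{W}$ by Lemma \ref{semi1}; this exhibits the chart at $x$ required by Definition \ref{maar1}.

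The main technical obstacle lies in the ``if'' direction: a single unnested product of charts $W\times V_j$ need not make $B\cap (W\times V_j)$ relatively compact in $W\times V_j$, since the closure of $B$ in $X\times Y$ may touch the boundary of the chart, in which case $\tilde{B}_j$ also fails to be relatively compact and Definition \ref{sub} cannot be invoked. The nested-chart construction---testing semianalyticity and the projection identity inside $\tilde{W}\times \tilde{V}_j$ while measuring relative compactness inside the strictly larger $\tilde{W}^*\times \tilde{V}_j^*$---is precisely what resolves this.
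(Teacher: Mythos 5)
Your overall strategy is the same as the paper's: both directions are handled by localizing to orbifold charts, and the ``if'' direction is resolved by shrinking to relatively compact neighborhoods so that the relevant piece of $B$ becomes relatively compact inside a strictly larger chart, where the manifold-level Definition \ref{sub} (or Theorem \ref{hiro}) can be applied. The ``only if'' direction is essentially identical to the paper's argument; the only caveat is that the semianalyticity of $(\varphi\times\mathrm{id})(\tilde{B})$ should be verified in the \emph{full} chart $\tilde{W}\times N$ with the whole group $G$ (its preimage there is $\bigcup_{g\in G}g\tilde{B}$), since the closure of the image may meet $(\overline{U}\setminus U)\times N$ and your check in the shrunken $G_{\tilde{x}}$-chart only covers points of $U\times N$. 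That is the same computation and costs nothing.

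There is one step in the ``if'' direction that does not go through as written. You need $\tilde{B}_j=(\varphi^*\times\psi_j^*)^{-1}\bigl(B\cap(W\times V_j)\bigr)$ to be semianalytic in the \emph{larger} product $\tilde{W}^*\times\tilde{V}_j^*$ --- this is forced for exactly the reason you identify for relative compactness, namely that $\overline{\tilde{B}_j}$ may meet the boundary of $\tilde{W}\times\tilde{V}_j$, so semianalyticity in the small product alone does not let you invoke Definition \ref{sub} over $\tilde{W}^*$. But Proposition \ref{huhhuh} yields this only if $B\cap(W\times V_j)$ is semianalytic in $X\times Y$, and for an arbitrary $G$-invariant connected open $\tilde{W}$ (resp.\ $\tilde{V}_j$) with compact closure the set $W\times V_j$ need not be semianalytic, hence neither need the intersection: the intersection of a semianalytic set with an open set is semianalytic only in that open set, and the defect occurs precisely on the boundary points that your nested construction is designed to handle. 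The repair is what the paper's Lemma \ref{ymp} supplies: choose the shrunken neighborhoods to be semianalytic (images of invariant balls), and intersect $B$ with $\overline{W}\times\overline{V_j}$, which is semianalytic by Theorem \ref{basicprop}(4) and Lemma \ref{semi2}. With that modification your argument is correct and coincides with the paper's proof.
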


\begin{proof}
First, let us assume that $A$ is a subanalytic subset of $X$.
Let $x\in X$. Then there is an orbifold chart $(\tilde{V}, G,\varphi)$ such that
$x\in V=\varphi(\tilde{V})$ and $\varphi^{-1}(A\cap V)$ is subanalytic in
$\tilde{V}$. Let $\tilde{x}\in \tilde{V}$ be such that $x=\varphi(\tilde{x})$.
Then $\tilde{x}$ has a neighborhood $W$ in $\tilde{V}$ such that there is a real
analytic manifold $M$ and a relatively compact semianalytic subset
$B$ of $\tilde{V}\times M$ with $\tilde{p}(B)=\varphi^{-1}(A\cap V)\cap W$,
where $\tilde{p}\colon \tilde{V}\times M\to \tilde{V}$ is the projection.

The product $X\times M$ is a real analytic orbifold. Let $p\colon X\times M\to
X$ be the projection and let ${\rm id}$ be the identity map of $M$. Then
$$
\varphi\circ\tilde{p}=p\circ(\varphi\times {\rm id}).
$$
The set $(\varphi\times {\rm id})(B)$ is relatively compact and it is
semianalytic since its inverse image 
$(\varphi\times {\rm id})^{-1}((\varphi\times{\rm id})(B))$
in $\tilde{V}\times M$
equals the union
$$
\bigcup_{g\in G}(gB),
$$ 
which is semianalytic as a finite union of semianalytic sets. Since
$p((\varphi\times{\rm id})(B))=(A\cap V)\cap\varphi({W})=
A\cap\varphi({W})$, 
and since $\varphi({W})$ is an open neighborhood of $x$,
the claim follows.

Let us then assume that the condition of the theorem holds. Let $x\in X$.
Then there is a real analytic orbifold $Y$ and a relatively compact
semianalytic subset $B$ of $X\times Y$ such that $A\cap U=p(B)$, where
$p\colon X\times Y\to X$ is the projection and $U$ is some neighborhood of $x$. 
Let $(\tilde{V}, G,\varphi)$ be an
orbifold chart such that $V=\varphi(\tilde{V})$ is a relatively compact semianalytic
set and $x\in V\subset \overline{V}\subset U$. Let $V'$ be an open semianalytic
neighborhood of $x$ such that $\overline{V'}\subset V$.  
By Lemma \ref{semi3}, 
$p^{-1}(V')$ is a semianalytic subset of $X\times Y$. Thus $B\cap p^{-1}(V')$
is a relatively compact semianalytic subset of $X\times Y$, and
$A\cap V'=p(B)\cap V'=p(B\cap p^{-1}(V'))$.

Since $B\cap p^{-1}(V')$ is relatively compact, it can be covered by finitely
many sets $V\times W_j$, $j=1,\ldots, n$, where the $W_j$ are basic open sets
in $Y$. Thus there are orbifold charts $(\tilde{W}_j, H_j, \psi_j)$ of $Y$ such that
$W_j=\psi_j(\tilde{W}_j)$, for every $j=1,\ldots, n$. Let $W'_j$, $j=1,\ldots, n$, be 
open semianalytic sets with $\overline{W'_j}\subset W_j$, for every $j$, and
such that the sets $V\times W'_j$ cover $B\cap p^{-1}(V)$. By Proposition \ref{huhhuh},
$$
(\varphi\times\psi_j)^{-1}( (B\cap p^{-1}(V'))\cap ({\overline{V'}}\times 
{\overline{W'_j}}))
$$
is a relatively compact semianalytic subset of $\tilde{V}\times\tilde{W}_j$,
for every $j=1\ldots, n$. Let $\tilde{p}\colon \tilde{V}\times\tilde{W}_j\to
\tilde{V}$ be the projection. By using Theorem \ref{hiro}, one can show that
$$
B_j=\varphi^{-1}(p((B\cap p^{-1}(V'))\cap (\overline{V'}\times\overline{W'_j})))=
\tilde{p}((\varphi\times\psi_j)^{-1}( (B\cap p^{-1}(V'))\cap ({\overline{V'}}\times 
{\overline{W'_j}})))
$$
is a subanalytic subset of $\tilde{V}$, for every $j$. 
It follows that
$$
\varphi^{-1}(A\cap V')=\varphi^{-1}(
p(B)\cap V')=\varphi^{-1}(
p((B\cap p^{-1}(V'))\cap \bigcup_{j=1}^n({\overline{V'}}\times 
{\overline{W'_j}})))=\bigcup_{j=1}^n B_j
$$
is a subanalytic subset of $\tilde{V}$. Consequently, $A\cap V'$ is a
subanalytic subset of $X$. By Lemma \ref{neighb}, $A$ is subanalytic.

\end{proof}

\section{The uniformization theorem}
 \label{uniformi} 
 
 \noindent In this section we prove a uniformization theorem for closed
 subanalytic subsets of real analytic quotient orbifolds. See Theorem
 0.1 in \cite{BM}, for the uniformization theorem in the manifold case.
 Our result follows from Theorem 2 in \cite{Ka2} which
 is an equivariant version of Theorem 0.1 in  \cite{BM}. 
 
 \begin{definition}
 \label{dim}
 Let $X$ be a real analytic orbifold (resp. manifold)
 and let $A$ be a subanalytic subset of $X$.
 Let $x\in A$. Then $x$ is a {\it smooth point} of $A$ of dimension $k$ if  $A\cap U$
 is a real analytic suborbifold (resp. submanifold) of dimension $k$ of $X$ 
 for some neighborhood $U$ of
 $x$. The {\it dimension} of $A$ is the highest dimension of its smooth points.
 \end{definition}
 
 \begin{lemma}
 \label{ennenunif}
 Let $G$ be a Lie group and let $M$ be a proper real analytic $G$-manifold.
 Assume the action of $G$ on $M$ is almost free. Let $\pi\colon M\to
 M/G$ be the natural projection and let $A$ be a subanalytic subset of
 $M/G$. Then $\pi^{-1}(A)$ is a subanalytic
 $G$-invariant subset of $M$ and  $\dim{\pi^{-1}(A)}=\dim{A}+\dim{G}$.
 \end{lemma}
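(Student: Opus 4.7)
The plan is to establish the three assertions separately. $G$-invariance of $\pi^{-1}(A)$ is immediate from $\pi(gx)=\pi(x)$, so the real content lies in subanalyticity and the dimension formula, both of which I would handle by reducing to a tube around a single orbit.

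For subanalyticity, I would argue locally via Lemma~\ref{neighb}. Given $\tilde{x}\in M$, the real analytic slice theorem (as used in the proof of Theorem~\ref{quotorb}) supplies a linear slice $U$ at $\tilde{x}$ for which $(U,G_{\tilde{x}},\pi|_U)$ is an orbifold chart of $M/G$; by Proposition~\ref{huhhuh}, $\pi^{-1}(A)\cap U$ is subanalytic in $U$. It remains to promote this to subanalyticity of $\pi^{-1}(A)$ in the open tube $W=G\cdot U$, which is real analytically diffeomorphic to the twisted product $G\times_{G_{\tilde{x}}}U$. The action map $\alpha\colon G\times U\to W$, $(g,u)\mapsto gu$, is a finite-to-one real analytic covering whose covering group $G_{\tilde{x}}$ is finite by almost-freeness, so near any $\tilde{z}=g_0u_0\in W$ one can shrink to a product neighborhood $O\times U'$ of $(g_0,u_0)$ on which $\alpha$ restricts to a real analytic diffeomorphism onto an open subset of $W$. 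Under this trivialization, $\pi^{-1}(A)\cap\alpha(O\times U')$ corresponds to $O\times(\pi^{-1}(A)\cap U')$, which is subanalytic in $O\times U'$ by Lemmas~\ref{semi1} and~\ref{semi2}. Two applications of Lemma~\ref{neighb} (first inside $W$, then inside $M$) then yield subanalyticity of $\pi^{-1}(A)$.

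For the dimension formula $\dim\pi^{-1}(A)=\dim A+\dim G$, I would compare smooth points on both sides through the same slice picture. At a smooth point $x=\pi(\tilde{x})\in A$ of dimension $k=\dim A$, Definitions~\ref{neat} and~\ref{dim} yield a $G_{\tilde{x}}$-invariant $k$-dimensional real analytic submanifold $V\subset U$ with $\pi^{-1}(A)\cap U=V$; then $\pi^{-1}(A)\cap W=G\cdot V$ is a $G$-invariant real analytic submanifold of $M$ of dimension $k+\dim G$ (using $\dim G_{\tilde{x}}=0$), giving smooth points of $\pi^{-1}(A)$ of dimension $k+\dim G$ and hence $\dim\pi^{-1}(A)\geq\dim A+\dim G$. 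Conversely, at any smooth point $\tilde{x}$ of $\pi^{-1}(A)$ of local dimension $\ell$, $G$-invariance forces $T_{\tilde{x}}(\pi^{-1}(A))$ to contain the orbit direction, so $\ell\geq\dim G$ and $\pi^{-1}(A)$ is transverse to a slice $U$ at $\tilde{x}$; the intersection $\pi^{-1}(A)\cap U$ is then a $G_{\tilde{x}}$-invariant real analytic submanifold of $U$ of dimension $\ell-\dim G$, which provides a suborbifold chart for $A$ at $x=\pi(\tilde{x})$ of dimension $\ell-\dim G$, giving the reverse inequality.

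The main obstacle I anticipate is the subanalyticity step inside the tube $W$: Proposition~\ref{huhhuh} only directly controls $\pi^{-1}(A)$ on a single slice, so spreading this information across the whole orbit requires the local trivialization of $\alpha\colon G\times U\to W$ described above. That trivialization is available precisely because $G_{\tilde{x}}$ is finite, so the almost-freeness hypothesis enters essentially here. Once the local product structure around an orbit is established, the dimension computation is routine bookkeeping about slices.
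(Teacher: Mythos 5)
Your argument is correct, and your treatment of the dimension formula is essentially the paper's: both directions compare smooth points of $A$ and of $\pi^{-1}(A)$ through a linear slice, the paper phrasing the inequality $\dim{A}\leq \dim{\pi^{-1}(A)}-\dim{G}$ via Theorem~\ref{apusub} where you invoke transversality of $\pi^{-1}(A)$ to the slice; these amount to the same computation. Where you genuinely diverge is the subanalyticity of $\pi^{-1}(A)$: the paper disposes of it in one line by observing that $\pi$ is a real analytic orbifold map and citing Theorem~\ref{kuvat}, whereas you re-derive that special case by hand, using the finite covering $G\times U\to GU$ together with Lemmas~\ref{semi1}, \ref{semi2} and~\ref{neighb} and Proposition~\ref{huhhuh}. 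Your version is longer but makes explicit the point the paper leaves implicit, namely that $\pi$ really does admit real analytic equivariant local lifts in the sense of Definition~\ref{maar2} -- your local trivialization of the tube is exactly that verification -- while the paper's route is shorter because Theorem~\ref{kuvat} has already packaged the equivariant-lift bookkeeping. Both arguments use almost-freeness in the same essential place, to make $G_{\tilde{x}}$ finite so that the slice chart and the covering exist.
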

 
 \begin{proof}  Since $\pi$ is a real analytic map, 
 it follows from Theorem \ref{kuvat}, that $\pi^{-1}(A)$ is
 subanalytic. Clearly, $\pi^{-1}(A)$ is $G$-invariant.
 Let $U$ be an open subset of $M$ such that $\pi^{-1}(A)\cap U$ is
 a real analytic manifold. Then, for every $g\in G$, $\pi^{-1}(A)\cap
 gU=g(\pi^{-1}(A)\cap U)$ is a real analytic manifold. Consequently, $\pi^{-1}(A)\cap GU$
 is a real analytic manifold and $A\cap \pi(U)$ is a real
 analytic orbifold.
 Since $G$ acts almost freely on $M$, $\dim{G/G_x}=\dim{G}$, for
 every $x\in M$.
 It follows that
 $$
 \dim{(\pi^{-1}(A)\cap GU)}=\dim{(A\cap \pi(U))}+\dim{G}. 
 $$
 Thus $\dim{\pi^{-1}(A)}\leq\dim{A}+\dim{G}$.
 
 Let then $V$ be an open subset of $M/G$ such that $A\cap V$ is a
 real analytic suborbifold of $M/G$. By Theorem \ref{apusub}, 
 $\pi^{-1}(A)\cap \pi^{-1}(V)=\pi^{-1}(A\cap V)$ is a proper real analytic 
 $G$-manifold on which $G$ acts almost freely. Thus
 $$
  \dim{(\pi^{-1}(A)\cap \pi^{-1}(V))}=\dim{(A\cap V)}+\dim{G}. 
 $$
It follows that $\dim{A}\leq \dim{\pi^{-1}(A)}-\dim{G}$, which
completes the proof.
 \end{proof}

 \begin{theorem}
 \label{important}
 Let $X$ be a real analytic quotient  orbifold. 
 Let $A$ be a closed subanalytic
 subset of $X$. Then there is a real analytic orbifold $Y$ 
 of the same dimension as $A$ and a proper
 real analytic map $f\colon Y\to X$ such that $f(Y)=A$.
 \end{theorem}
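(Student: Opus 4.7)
The plan is to lift everything to the manifold cover, apply the known equivariant uniformization theorem there, and then push the result back down to the orbifold.

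First, since $X$ is a real analytic quotient orbifold, there is a Lie group $G$ acting properly, real analytically, and almost freely on a real analytic manifold $M$ with $M/G = X$. Let $\pi\colon M\to X$ be the natural projection. By Lemma \ref{ennenunif}, $\pi^{-1}(A)$ is a closed $G$-invariant subanalytic subset of $M$ of dimension $\dim A+\dim G$.

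Next, I would apply Theorem 2 in \cite{Ka2} (the equivariant version of the uniformization theorem from \cite{BM}) to the closed $G$-invariant subanalytic set $\pi^{-1}(A)\subset M$. This produces a real analytic $G$-manifold $N$ of dimension $\dim\pi^{-1}(A)=\dim A+\dim G$ together with a proper $G$-equivariant real analytic map $g\colon N\to M$ satisfying $g(N)=\pi^{-1}(A)$.

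Then I would verify that $G$ acts properly and almost freely on $N$. Almost freeness is automatic: for $y\in N$ the isotropy $G_y$ injects into $G_{g(y)}$, which is finite because the action on $M$ is almost free. For properness of the $G$-action on $N$, note that the map $\Phi_N\colon G\times N\to N\times N$, $(h,y)\mapsto(hy,y)$, fits into the relation $(g\times g)\circ\Phi_N=\Phi_M\circ(\mathrm{id}\times g)$, where $\Phi_M$ is the corresponding map for $M$. Given a compact $K\subset N\times N$, properness of $\Phi_M$ confines the $G$-component to a compact set in $G$ and forces $g(y)$ into a compact subset of $M$; properness of $g$ then confines $y$ to a compact subset of $N$. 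Hence by Theorem \ref{quotorb}, $Y:=N/G$ is a real analytic quotient orbifold, of dimension $\dim N-\dim G=\dim A$.

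Finally, the equivariance of $g$ gives a well-defined induced map $f\colon Y\to X$ with $\pi\circ g=f\circ\pi_N$, where $\pi_N\colon N\to Y$ is the natural projection. This $f$ is a real analytic orbifold map (the local lifts are provided by $g$ on suitable slices) and $f(Y)=\pi(g(N))=\pi(\pi^{-1}(A))=A$. Properness of $f$ follows from properness of $g$ and of $\pi$: for $K\subset X$ compact, $\pi^{-1}(K)$ admits compact saturated subsets covering it, and $g^{-1}$ of such a subset is compact and $G$-invariant, so its image in $Y$ is compact and contains $f^{-1}(K)$. The main subtle step is the application of Theorem 2 in \cite{Ka2}: one must check that its hypotheses (closedness, $G$-invariance, subanalyticity, plus the relevant properness assumptions on the $G$-action) are exactly what we have here, and that the output map can indeed be taken proper and equivariant with the claimed dimension.
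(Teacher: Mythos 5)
Your proposal is correct and follows essentially the same route as the paper: lift $A$ to the closed $G$-invariant subanalytic set $\pi^{-1}(A)\subset M$, apply the equivariant uniformization theorem of \cite{Ka2} to obtain $N$ and the proper equivariant map onto $\pi^{-1}(A)$, and descend to $Y=N/G$ with the dimension count coming from Lemma \ref{ennenunif}. The extra verifications you supply (properness of the $G$-action on $N$ and properness of the induced map $f$) are points the paper leaves implicit, and your arguments for them are sound.
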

 
 \begin{proof}
 Since $X$ is a quotient orbifold, there is a real
 analytic manifold $M$ and a Lie group $G$ acting on
 $M$ real analytically, properly and with finite isotropy groups such that
 $X$ equals the orbit space $M/G$. Thus we can consider $A$
 as a subset of $M/G$. Let $\pi\colon M\to M/G$ be the natural
 projection. Then  $\pi^{-1}(A)$ is a closed subanalytic  $G$-invariant
 subset of $M$.
 By Theorem 2 in \cite{Ka2}, there exists a proper real analytic $G$-manifold
 $N$, of the same dimension as $\pi^{-1}(A)$ and a  proper real
 analytic $G$-equivariant map $f\colon N\to M$ such that $f(N)=
 \pi^{-1}(A)$.
 
 Since the isotropy group of each point of $M$ is finite and
 since $G_x\subset G_{f(x)}$, for every $x\in N$, it follows that
 the isotropy group of any point of $N$ is finite. Thus the
 orbit space $N/G$ is a real analytic quotient orbifold. The induced map
 $\bar{f}\colon N/G\to M/G=X$ is a real analytic orbifold map
 and $\bar{f}(N/G)=\pi(f(N))=A$. By Lemma \ref{ennenunif},
 $$
 \dim{N/G}=\dim{N}-\dim{G}=\dim{\pi^{-1}(A)}-\dim{G}=\dim{A},
 $$
and  the claim follows.
 \end{proof}

\end {document}